\journal{Elsevier}
\newdefinition{definition}{Definition}[section]
\newdefinition{example}{Example}%[section]
\newdefinition{remark}{Remark}%[section]
\newtheorem{theorem}{Theorem}[section]
\newtheorem{lemma}[theorem]{Lemma}%[section]
\newproof{proof}{Proof}
\numberwithin{equation}{section}
\renewcommand{\pi}{\piup}
\renewcommand{\Im}{\operatorname{Im}}
\newcommand{\mathbd}[1]{\boldsymbol{#1}}
\newcommand{\divv}{\mathrm{d}}
\newcommand{\diff}{\,\divv}
\newcommand{\trans}{\mathrm{T}}
\newcommand{\domD}{\mathscr{D}}
\newcommand{\LC}{\mathbf{L}}
\newcommand{\Csp}{\mathbf{C}}
\newcommand{\Ident}{\mathcal{I}}
\newcommand{\Hinf}{\mathbf{H}^{\infty}}
\newcommand{\textSE}{\text{\tiny{\rm SE}}}
\newcommand{\textDE}{\text{\tiny{\rm DE}}}
\newcommand{\SEt}{\psi^{\textSE}}
\newcommand{\DEt}{\psi^{\textDE}}
\newcommand{\SEtInv}{\{\SEt\}^{-1}}
\newcommand{\DEtInv}{\{\DEt\}^{-1}}
\newcommand{\SEtDiv}{\{\SEt\}'}
\newcommand{\DEtDiv}{\{\DEt\}'}
\newcommand{\tSE}{t^{\textSE}}
\newcommand{\tDE}{t^{\textDE}}
\newcommand{\wSE}{w^{\textSE}}
\newcommand{\wDE}{w^{\textDE}}
\newcommand{\Vol}{\mathcal{V}}
\newcommand{\VolSE}{\mathcal{V}^{\textSE}_N}
\newcommand{\VolDE}{\mathcal{V}^{\textDE}_N}
\newcommand{\uSE}{u^{\textSE}_N}
\newcommand{\uDE}{u^{\textDE}_N}
\newcommand{\tuSE}{\tilde{\mathbd{u}}^{\textSE}_n}
\newcommand{\gSE}{G^{\textSE}_N}
\newcommand{\gDE}{G^{\textDE}_N}
\newcommand{\matSE}[1]{{#1}^{\textSE}_n}
\newcommand{\matDE}[1]{{#1}^{\textDE}_n}
\newcommand{\vecSE}[1]{\mathbd{#1}^{\textSE}_n}
\newcommand{\vecDE}[1]{\mathbd{#1}^{\textDE}_n}
\newcommand{\calSE}[1]{\mathcal{#1}^{\textSE}_N}
\newcommand{\calDE}[1]{\mathcal{#1}^{\textDE}_N}
\newcommand{\InZero}{I_n}
\newcommand{\InMinus}{I^{(-1)}_n}
\newcommand{\imnum}{\mathrm{i}\,}
\DeclareMathOperator{\Si}{Si}
\DeclareMathOperator{\arctanh}{arctanh}
\DeclareMathOperator{\arcsinh}{arcsinh}
\DeclareMathOperator{\Order}{O}
\DeclareMathOperator{\rme}{e}
\DeclareMathOperator{\diag}{diag}
\begin{document}

\begin{frontmatter}

\title{Theoretical analysis of a Sinc-Nystr\"{o}m method for Volterra integro-differential equations and its improvement\tnoteref{mytitlenote}}
\tnotetext[mytitlenote]{This work was partially supported by the JSPS Grant-in-Aid for Young Scientists (B) JP17K14147.}

\author{Tomoaki Okayama}
\address{Graduate School of Information Sciences, Hiroshima City University,
3-4-1, Ozuka-higashi, Asaminami-ku, Hiroshima 731-3194, Japan}
\ead{okayama@hiroshima-cu.ac.jp}

 \begin{abstract}
 A Sinc-Nystr\"{o}m method for Volterra integro-differential equations
 was developed by Zarebnia in 2010.
 The method is quite efficient in the sense that
 exponential convergence can be obtained
 even if the given problem has endpoint singularity.
 However, its exponential convergence has not been proved theoretically.
 In addition, to implement
 the method, the regularity of the solution is required, although the solution is an unknown function in practice.
 This paper reinforces the method by presenting two theoretical results:
 1) the regularity of the solution is analyzed, and
 2) its convergence rate is rigorously analyzed.
 Moreover, this paper improves the method
 so that a much higher convergence rate can be attained,
 and theoretical results similar to those listed above are provided.
 Numerical comparisons are also provided.
 \end{abstract}

\begin{keyword}
Sinc numerical method
\sep
initial value problem
\sep
convergence analysis
\sep
tanh transformation
\sep
double-exponential transformation
\MSC[2010] 65L03 \sep 65R20
\end{keyword}

\end{frontmatter}

\section{Introduction}\label{s:1}

This paper is concerned with
Volterra integro-differential equations of the form
\begin{equation}
\begin{split}
u'(t)&=g(t) + \mu(t) u(t) + \int_a^t k(t,r)u(r)\diff r,\quad a\leq t\leq b,\\
u(a)&=u_a,
\end{split}\label{eq:Volterra-integro-diff}
\end{equation}
where $g(t)$, $\mu(t)$, and $k(t,r)$ are known functions,
and $u(t)$ is the solution to be determined for a given initial value $u_a$.
The equations have been utilized as mathematical models
in many fields, including
population dynamics~\cite{2003Zhao},
finance~\cite{2003Makroglou}, and
viscoelasticity~\cite{2001Shaw}, among others.
Because of their importance in applications,
various numerical methods for solving these equations
have been studied
(see, for example, Brunner~\cite{2004BrunnerBook,2004BrunnerArt},
Driscoll~\cite{driscoll10:_autom},
and the references therein).
Most of these methods seem to assume that
the functions $g(t)$, $\mu(t)$, and $k(t,r)$ are at least continuous
for all $t,\,r \in [a,\,b]$;
%on the given interval $[a,\,b]$ and the domain $[a,\,b]\times [a,\,b]$,
otherwise, their convergence becomes poor.
%their convergence rate is polynomial: $\Order(N^{-c})$,
%where $c>0$ and $N$ is the number of degrees of freedom in
%the approximation space.

%and $\mathcal{V}$ is the Volterra integral operator
%defined by $\mathcal{V}[f](t)=\int_a^tk(t,r)f(r)\,\mathrm{d}r$,
%where $k(t,r)$ is a known function.

In contrast,
Zarebnia~\cite{2010Zarebnia}
developed quite a promising scheme
by means of the Sinc-Nystr\"{o}m method.
The scheme was derived
without assuming continuity over the whole interval
(e.g., endpoint singularity such as $g(t)=1/\sqrt{t-a}$ is acceptable;
see also Remark~\ref{rem:endpoint-singularity} and Example~\ref{exmp3}).
Furthermore,
its \emph{exponential} convergence, which is much faster than
polynomial convergence,
was suggested in the following way.
The error of the numerical solution $u_N$ was analyzed~\cite{2010Zarebnia} as
\begin{equation}
 \sup_{t\in(a,\,b)}|u(t) - u_N(t)|
\leq C \|A_N^{-1}\|_2 \sqrt{N}\rme^{-\sqrt{\pi d \alpha N}},
\label{leq:Zarebnia-error}
\end{equation}
where $d$ and $\alpha$ are positive parameters that indicate the regularity
of the functions,
and $A_N$ denotes the coefficient matrix of the resulting linear system.
From~\eqref{leq:Zarebnia-error},
we see that the scheme can achieve exponential convergence
if $\|A_N^{-1}\|_2$ does not grow rapidly.
Although
it can be observed in numerical experiments,
no theoretical estimate of $\|A_N^{-1}\|_2$
has yet been given.

The first objective of this paper is to prove
the exponential convergence by showing
\begin{equation}
 \max_{t\in[a,\,b]}|u(t)-u_N(t)|
\leq C \rme^{-\sqrt{\pi d \alpha N}}.
\label{leq:Okayama-error}
\end{equation}
Note that
this approach to the error analysis is completely different
from the one in Zarebnia~\cite{2010Zarebnia};
instead of analyzing the matrix $A_N$,
operator theory is utilized to obtain~\eqref{leq:Okayama-error}.

The second objective of this paper is to
analyze the regularity of solution $u$,
which is important in applications.
In the previous study~\cite{2010Zarebnia},
the regularity of solution $u$ was assumed to be given,
and this was necessary for implementation of the scheme.
In practice, however,
$u$ is an \emph{unknown} function to be determined,
and thus we cannot examine it directly
to investigate its regularity.
To remedy this situation,
%this paper analyzes the regularity of $u$ theoretically,
%and as a result shows that
this paper shows theoretically that
the necessary information for implementation
(regularity of $u$)
can be determined from the \emph{known} functions $g$, $\mu$,
and $k$.

The third objective of this paper is to improve
the original Sinc-Nystr\"{o}m method
so that it can achieve much faster convergence.
The difference between the original version and our improved version is in the variable transformation;
the single-exponential (SE) transformation is employed
in the original scheme~\cite{2010Zarebnia}
(which is accordingly called the SE-Sinc-Nystr\"{o}m method),
whereas our improved scheme uses the double-exponential (DE) transformation
(which is thus called the DE-Sinc-Nystr\"{o}m method).
In the Sinc numerical method literature,
it is known that
such a replacement generally accelerates
the convergence rate
from $\Order(\exp(-c\sqrt{N}))$ to
$\Order(\exp(-c' N/\log N))$~\cite{2001Mori,2004Sugihara}.
In fact, in this case as well,
error analysis of this paper shows the suggested rate as
\[
  \max_{t\in[a,\,b]}|u(t)-u_N(t)|
\leq C \frac{\log(2 d N/\alpha)}{N} \rme^{-\pi d N/\log(2 d N/\alpha)}.
\]
Furthermore,
regarding the regularity of the solution,
this paper also gives the same theoretical result as above:
the necessary information for implementation
(of the DE-Sinc-Nystr\"{o}m method)
can be determined from the known functions.
Note that we assume the known functions are given in an analytic form;
otherwise, the theoretical result cannot be used.
%$g$, $\mu$, and $k$.

The remainder of this paper is organized as follows.
In Section~\ref{s:2}, we review the Sinc indefinite integration,
which will be needed in the subsequent sections.
In Section~\ref{sec:SE-Sinc-Nystroem},
the existing results for the SE-Sinc-Nystr\"{o}m method are described, and
we discuss them in terms of the first and second objectives of this paper.
Section~\ref{sec:DE-Sinc-Nystroem} contains
the results on the DE-Sinc-Nystr\"{o}m method (the third objective).
Proofs of the presented theorems are given in Section~\ref{sec:proofs}.
Numerical examples are shown in Section~\ref{sec:numer-exam}.
Concluding remarks are stated in Section~\ref{sec:concl}.

\section{Sinc indefinite integration}\label{s:2}

The Sinc indefinite integration
is an approximation formula for the
indefinite integral of an integrand $F$,
which is defined over the real axis, and expressed as
\begin{equation}
 \int_{-\infty}^{\xi} F(x)\diff x
\approx \sum_{j=-N}^N F(jh) J(j,h)(\xi), \quad \xi\in\mathbb{R}.
\label{approx:Sinc-indef}
\end{equation}
Here, $h$ is the mesh size, and the basis function $J(j,h)(\xi)$ is defined by
\[
 J(j,h)(\xi)
=\int_{-\infty}^{\xi}\frac{\sin[\pi(x/h-j)]}{\pi(x/h-j)}\diff x
=h\left\{\frac{1}{2}+\frac{1}{\pi}\Si[\pi(\xi/h - j)]\right\},
\]
where $\Si(x)=\int_0^x [(\sin \tau)/\tau]\diff \tau$
is the so-called sine integral function.
This formula can be applied
in the case of a finite interval $(a,\,b)$,
by combining it with a variable transformation
that maps $\mathbb{R}$ onto $(a,\,b)$.
Haber~\cite{1993Haber} employed the SE transformation
\[
 s=\SEt(x) = \frac{b-a}{2}\tanh\left(\frac{x}{2}\right)+\frac{b+a}{2},
\]
and
applied~\eqref{approx:Sinc-indef}
with $F(x)=f(\SEt(x))\SEtDiv(x)$ to obtain
\[
\int_a^{t}f(s)\diff s
=\int_{-\infty}^{\SEtInv(t)}f(\SEt(x))\SEtDiv(x)\diff x
\approx\sum_{j=-N}^N f(\tSE_j)\wSE_j(t),
\]
where $\tSE_j=\SEt(jh)$ and $\wSE_j(t)=\SEtDiv(jh)J(j,h)(\SEtInv(t))$.
This approximation is called the SE-Sinc indefinite integration.
Following this, Muhammad--Mori~\cite{2003Muhammad}
proposed replacing the SE transformation
with the DE transformation
\[
 s=\DEt(x) = \frac{b-a}{2}\tanh\left(\frac{\pi}{2}\sinh x\right)+\frac{b+a}{2},
\]
from which they derived the DE-Sinc indefinite integration as
\[
\int_a^{t}f(s)\diff s
=\int_{-\infty}^{\DEtInv(t)}f(\DEt(x))\DEtDiv(x)\diff x
\approx\sum_{j=-N}^N f(\tDE_j)\wDE_j(t),
\]
where $\tDE_j=\DEt(jh)$ and $\wDE_j(t)=\DEtDiv(jh)J(j,h)(\DEtInv(t))$.

These two approximations can achieve exponential convergence.
To describe this more precisely,
we need to introduce the following function space.

\begin{definition}
\label{Def:LC}
Let $\alpha$ be a positive constant
and $\domD$ be a bounded and simply connected domain
(or Riemann surface)
that satisfies $(a,\,b)\subset \domD$.
Then, $\LC_{\alpha}(\domD)$ denotes the family of functions
that are analytic on $\domD$ and bounded by a constant $K$ and the function $Q(z)=(z-a)(b-z)$
%for which there exists a constant $K$ such that
for all $z$ in $\domD$ as
\begin{equation}
|f(z)|\leq K |Q(z)|^{\alpha}. \label{Leq:LC-bounded-by-Q}
\end{equation}
%for all $z$ in $\domD$.
\end{definition}

Note that this function space considers
functions of a complex variable,
and hereafter, functions will be supposed to be defined in the complex domain.
In this paper,
the domain $\domD$ is supposed to be either
\begin{equation*}
\SEt(\domD_d) = \{z=\SEt(\zeta):\zeta\in\domD_d\}
\quad \text{or} \quad
\DEt(\domD_d) = \{z=\DEt(\zeta):\zeta\in\domD_d\},
%\label{eq:translated-domains}
\end{equation*}
which denotes the region translated from the strip domain
$\domD_d=\{\zeta\in\mathbb{C}:|\Im\zeta|<d\}$
for $d>0$.
% a positive constant $d$.
The former domain $\SEt(\domD_d)$ is a lens-shaped domain,
whereas the latter one $\DEt(\domD_d)$ is an infinitely sheeted
Riemann surface
(see also Tanaka et al.~\cite[Figures~1 and~5]{tanaka09:_desinc}
for the concrete shape of each domain, where
$d=1$ and $(a,\,b)=(-1,\,1)$).
%(see also Figures~\ref{Fig:SE-trans-dom} and~\ref{Fig:DE-trans-dom}).
Using these definitions, the convergence theorems
for the SE/DE-Sinc indefinite integration can be stated as follows.

\begin{theorem}[Okayama et al.~{\cite[Theorem 2.9]{okayama13:_error}}]
\label{Thm:SE-Sinc-Indef}
Let $(fQ)\in\LC_{\alpha}(\SEt(\domD_d))$ for $d$ with $0<d<\pi$.
Let $N$ be a positive integer,
and let $h$ be selected by
% the formula
\begin{equation}
h=\sqrt{\frac{\pi d}{\alpha N}}.\label{Def:h-SE}
\end{equation}
Then, there exists a constant
$C_{\alpha,d}^{\textSE}$ that depends only on $\alpha$ and $d$
such that
\begin{equation*}
   \max_{t\in[a,\,b]}
    \left|\int_{a}^{t}f(s)\diff s
      -\sum_{j=-N}^{N}f(\tSE_j)\wSE_j(t)
    \right|
\leq K (b-a)^{2\alpha - 1} C_{\alpha,d}^{\textSE}
 \rme^{-\sqrt{\pi d \alpha N}},
\end{equation*}
where
$K$ is the constant in~\eqref{Leq:LC-bounded-by-Q}.
\end{theorem}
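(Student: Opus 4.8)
The plan is to pull the statement back to the real line through the SE transformation, split the resulting error into a \emph{discretization} part and a \emph{truncation} part, and balance the two by the choice~\eqref{Def:h-SE}. Concretely, I would set $F(x)=f(\SEt(x))\SEtDiv(x)$ and $\xi=\SEtInv(t)$. By the change-of-variable identity recorded in Section~\ref{s:2}, $\int_a^t f(s)\diff s=\int_{-\infty}^{\xi}F(x)\diff x$, while $f(\tSE_j)\wSE_j(t)=F(jh)J(j,h)(\xi)$ by the definitions of $\tSE_j$ and $\wSE_j$; since $\xi$ sweeps all of $\mathbb{R}$ as $t$ runs over $(a,\,b)$, the quantity to estimate is
\[
 E=\sup_{\xi\in\mathbb{R}}\left|\int_{-\infty}^{\xi}F(x)\diff x-\sum_{j=-N}^{N}F(jh)J(j,h)(\xi)\right|.
\]
A direct computation gives $\SEtDiv(x)=Q(\SEt(x))/(b-a)$, so $F=(fQ)\circ\SEt/(b-a)$ and the hypothesis $(fQ)\in\LC_{\alpha}(\SEt(\domD_d))$ furnishes $|F(x)|\le \frac{K}{b-a}|Q(\SEt(x))|^{\alpha}$ for $x\in\domD_d$. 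Because $|Q(\SEt(x))|=\bigl(\tfrac{b-a}{2}\bigr)^{2}|\operatorname{sech}(x/2)|^{2}$, on each horizontal line $\Im x=y$ with $|y|<\pi$ this decays like $\rme^{-\alpha|\Re x|}$; the restriction $d<\pi$ is precisely what keeps $\SEt$ analytic on $\domD_d$ (the nearest poles of $\tanh(x/2)$ sit at $x=\pm\imnum\pi$) and keeps this boundary decay valid.

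Next I would decompose $E\le E_1+E_2$, where, writing $\mathcal{C}_hF(x)=\sum_{j=-\infty}^{\infty}F(jh)\,\dfrac{\sin[\pi(x/h-j)]}{\pi(x/h-j)}$ for the cardinal series,
\[
 E_1=\sup_{\xi}\left|\int_{-\infty}^{\xi}\bigl(F(x)-\mathcal{C}_hF(x)\bigr)\diff x\right|,\qquad E_2=\sup_{\xi}\left|\sum_{|j|>N}F(jh)J(j,h)(\xi)\right|.
\]
The truncation term is the easy one: from $|J(j,h)(\xi)|\le C h$ (a consequence of the uniform bound on $\tfrac12+\tfrac1\pi\Si(\cdot)$) and $|F(jh)|\le C' K(b-a)^{2\alpha-1}\rme^{-\alpha|j|h}$ one gets $E_2\le C''K(b-a)^{2\alpha-1}\,\frac{h}{1-\rme^{-\alpha h}}\,\rme^{-\alpha Nh}$, and the prefactor $h/(1-\rme^{-\alpha h})$ stays bounded as $h\to0$.

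The real work is $E_1$. The point is the identity $\int_{-\infty}^{\xi}\mathcal{C}_hF(x)\diff x=\sum_{j=-\infty}^{\infty}F(jh)J(j,h)(\xi)$, which shows $E_1$ is the integral of the cardinal interpolation error of $F$; bounding the upper limit crudely gives $E_1\le\|F-\mathcal{C}_hF\|_{L^1(\mathbb{R})}$. The hard part is to control this $L^1$ norm, and here I would invoke the standard contour representation of the Sinc interpolation error along $\partial\domD_{d'}$ with $d'\uparrow d$, estimating it through the analyticity of $F$ on $\domD_d$ and the integrability of $|F(\cdot\pm\imnum d')|$ along the horizontal boundaries --- finiteness and uniform control of these boundary integrals as $d'\uparrow d$ being exactly what the $\LC_\alpha$ bound and its $\rme^{-\alpha|\Re x|}$ decay guarantee. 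This yields $E_1\le CK(b-a)^{2\alpha-1}\rme^{-\pi d/h}$, the $(b-a)$ dependence again collapsing to $(b-a)^{2\alpha-1}$ because $Q$ scales as $(b-a)^2$ and $\SEtDiv$ as $(b-a)$ while $J(j,h)$ and $\xi=\SEtInv(t)$ are scale-invariant.

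Finally, the two exponents match under~\eqref{Def:h-SE}: with $h=\sqrt{\pi d/(\alpha N)}$ one has $\pi d/h=\alpha N h=\sqrt{\pi d\alpha N}$, so both $E_1$ and $E_2$ are $\Order(\rme^{-\sqrt{\pi d\alpha N}})$ and the bounded $h$-prefactors are absorbed into a single constant $C_{\alpha,d}^{\textSE}$ depending only on $\alpha$ and $d$. I expect the delicate point to be the rigorous $L^1$ estimate of the interpolation error, together with the justification that the boundary-line integrals of $F$ are finite and uniformly controlled as $d'\uparrow d$; the rest is bookkeeping of constants and the scaling in $b-a$.
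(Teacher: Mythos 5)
The paper does not prove Theorem~\ref{Thm:SE-Sinc-Indef} at all --- it is imported verbatim from Okayama et al.~\cite{okayama13:_error} --- so there is no in-paper proof to compare against, only the citation. Your sketch reconstructs the standard argument behind that reference (pull back by $\SEt$, note $\SEtDiv=Q\circ\SEt/(b-a)$ so that $F=(fQ)\circ\SEt/(b-a)$ decays like $\rme^{-\alpha|\Re x|}$ on horizontal lines of $\domD_d$ for $d<\pi$, split into the $L^1$-type discretization error of the cardinal series plus the $|j|>N$ truncation tail, and balance $\rme^{-\pi d/h}$ against $\rme^{-\alpha Nh}$ via $h=\sqrt{\pi d/(\alpha N)}$), and it is correct in outline; the one genuinely nontrivial step --- that the interpolation error $F-\mathcal{C}_hF$ is integrable and $\Order(\rme^{-\pi d/h})$ in $L^1(\mathbb{R})$ even though the individual sinc kernels are not in $L^1$, which follows from the contour representation on $\partial\domD_{d'}$ with $d'\uparrow d$ --- is correctly identified by you as the place where the real work lives rather than being glossed over.
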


\begin{theorem}[Okayama et al.~{\cite[Theorem 2.16]{okayama13:_error}}]
\label{Thm:DE-Sinc-Indef}
Let $(fQ)\in\LC_{\alpha}(\DEt(\domD_d))$ for $d$ with $0<d<\pi/2$.
Let $N$ be a positive integer,
and let $h$ be selected by
% the formula
\begin{equation}
h=\frac{\log(2 d N/\alpha)}{N}.\label{Def:h-DE}
\end{equation}
Then, there exists a constant
$C_{\alpha,d}^{\textDE}$ that depends only on $\alpha$ and $d$
such that
\begin{align*}
\max_{t\in[a,\,b]}
    \left|\int_{a}^{t}f(s)\diff s
      -\sum_{j=-N}^{N}f(\tDE_j)\wDE_j(t)
    \right|
\leq K (b-a)^{2\alpha - 1} C_{\alpha,d}^{\textDE}
  \frac{\log(2 d N/\alpha)}{N}
  \exp\left[\frac{-\pi d N}{\log(2 d N/\alpha)}\right],
\end{align*}
where
$K$ is the constant in~\eqref{Leq:LC-bounded-by-Q}.
\end{theorem}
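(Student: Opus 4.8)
The plan is to follow the same two-stage template used for the SE case (Theorem~\ref{Thm:SE-Sinc-Indef}) and transplant it to the double-exponential setting. First I would undo the variable transformation: setting $F(x)=f(\DEt(x))\DEtDiv(x)$ and $\xi=\DEtInv(t)$, and recalling $\tDE_j=\DEt(jh)$ and $\wDE_j(t)=\DEtDiv(jh)J(j,h)(\DEtInv(t))$, the quantity to be bounded becomes
\begin{equation*}
 \max_{\xi\in\mathbb{R}}\left|\int_{-\infty}^{\xi}F(x)\diff x-\sum_{j=-N}^{N}F(jh)J(j,h)(\xi)\right|.
\end{equation*}
Inserting and subtracting the full bi-infinite Sinc indefinite integration $\sum_{j=-\infty}^{\infty}F(jh)J(j,h)(\xi)$, I split this into a \emph{discretization error} (the gap between $\int_{-\infty}^{\xi}F$ and the full sum) and a \emph{truncation error} $\sum_{|j|>N}F(jh)J(j,h)(\xi)$, and estimate the two separately.

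For the discretization error I would appeal to the standard estimate for Sinc indefinite integration over $\mathbb{R}$, which bounds it by a constant multiple of $h\,\rme^{-\pi d/h}$ (the exponent $\pi d/h$, rather than $2\pi d/h$, reflecting the loss of one order inherent to indefinite integration, and the factor $h$ coming from the scale of the basis $J(j,h)$), provided $F$ is analytic on $\domD_d$ and the boundary integral $\int_{-\infty}^{\infty}(|F(x+\imnum y)|+|F(x-\imnum y)|)\diff x$ remains bounded as $y\to d^{-}$. Verifying these hypotheses is the heart of the matter: from $(fQ)\in\LC_{\alpha}(\DEt(\domD_d))$, i.e.\ $|f(z)|\le K|Q(z)|^{\alpha-1}$, together with the explicit identity $Q(\DEt(\zeta))=(b-a)^2/\{4\cosh^2(\tfrac{\pi}{2}\sinh\zeta)\}$ and the corresponding closed form for $\DEtDiv(\zeta)$, I would estimate $|F(\zeta)|$ along the horizontal lines $\Im\zeta=\pm d$. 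A byproduct of this computation is that the powers of $(b-a)$ assemble into exactly $(b-a)^{2\alpha-1}$, matching the claimed prefactor.

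For the truncation error I would use the uniform bound $|J(j,h)(\xi)|\le Ch$ (from the boundedness of $\Si$) to reduce it to $Ch\sum_{|j|>N}|F(jh)|$, and then exploit the double-exponential decay $|F(x)|\lesssim(b-a)^{2\alpha-1}\cosh x\cdot\rme^{-\alpha\pi\sinh|x|}$ on the real axis, which follows from the same identities as above. The tail is dominated by its leading term at $|j|=N$, yielding a bound of order $\rme^{-(\alpha\pi/2)\rme^{Nh}}$. Substituting $h=\log(2dN/\alpha)/N$ turns the discretization factor $\rme^{-\pi d/h}$ into $\rme^{-\pi d N/\log(2dN/\alpha)}$, while the truncation term becomes $\Order(\rme^{-\pi d N})$ and is negligible by comparison; collecting the surviving factor $h=\log(2dN/\alpha)/N$ produces the stated rate.

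The hard part will be the boundary-integrability verification in Step~2. Unlike the SE transformation, whose strip image $\SEt(\domD_d)$ is a bounded eye-shaped domain, $\DEt$ maps $\domD_d$ onto an infinitely-sheeted Riemann surface, so on the lines $\Im\zeta=\pm d$ one must control the composite $\cosh(\tfrac{\pi}{2}\sinh\zeta)$ as $\Re\zeta\to\pm\infty$ rather than over a compact set. Writing $\sinh(x\pm\imnum d)=\sinh x\cos d\pm\imnum\cosh x\sin d$, this is precisely where the restriction $0<d<\pi/2$ is indispensable (it forces $\cos d>0$, hence $\Re\sinh(\zeta)\to+\infty$ and the required decay of $F$ on the boundary), and where the most delicate complex-analytic estimates will be concentrated.
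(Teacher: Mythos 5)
The paper does not prove this theorem at all --- it is imported verbatim as Theorem~2.16 of the cited reference (Okayama et al.), so there is no in-paper proof to compare against. Your outline is the standard argument used in that reference (pull back to the strip, split into discretization and truncation errors, verify the boundary $L^1$ bound via the identities for $Q(\DEt(\zeta))$ and $\DEtDiv(\zeta)$, and check that the choice $h=\log(2dN/\alpha)/N$ balances $h\,\rme^{-\pi d/h}$ against the $\Order(\rme^{-\pi d N})$ tail), and your bookkeeping of the exponents and of the $(b-a)^{2\alpha-1}$ prefactor is consistent with both the SE and DE statements.
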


\begin{remark}
\label{rem:endpoint-singularity}
As mentioned in the introduction,
the assumption $(fQ)\in\LC_{\alpha}(\domD)$
in the theorems does not assume continuity on $[a,\,b]$ overall,
but accepts endpoint singularities.
For example, $g(t)=1/\sqrt{t-a}$ is acceptable
because $(gQ)\in\LC_{1/2}(\domD)$.
\end{remark}

\section{SE-Sinc-Nystr\"{o}m method}
\label{sec:SE-Sinc-Nystroem}

\subsection{Existing results: the proposed scheme and its error analysis}

First, by integrating Eq.~\eqref{eq:Volterra-integro-diff},
we obtain
\begin{equation}
 u(t)=u_a+\int_a^t\left\{
g(s)+\mu(s)u(s) +
\Vol[u](s)
%\int_a^s k(s,r)u(r)\diff r
\right\}\diff s,\quad a\leq t\leq b,
\label{eq:real-target}
\end{equation}
where $\Vol[u](s)=\int_a^s k(s,r)u(r)\diff r$.
%which can be considered as a Volterra integral equation.
Zarebnia~\cite{2010Zarebnia}
developed his scheme for~\eqref{eq:real-target}
using the SE-Sinc indefinite integration as follows.
Let $0<\alpha\leq 1$
and let $g Q$, $\mu u Q$, and $(\Vol u) Q$
belong to $\LC_{\alpha}(\SEt(\domD_d))$.
Then, according to Theorem~\ref{Thm:SE-Sinc-Indef},
the integral in~\eqref{eq:real-target}
is approximated as
\begin{align*}
\int_a^t\left\{
g(s)+\mu(s)u(s) +
\Vol[u](s)
\right\}\divv s
\approx
 \sum_{j=-N}^N \{g(\tSE_j) + \mu(\tSE_j)u(\tSE_j)+\Vol[u](\tSE_j)\}\wSE_j(t).
\end{align*}
Furthermore, let
$k(s,\cdot)u(\cdot)Q(\cdot)\in\LC_{\alpha}(\SEt(\domD_d))$
for all $s\in [a,\,b]$.
In the same manner as above, $\Vol u$ is approximated by the term
\[
 \VolSE[u](s)
=\sum_{m=-N}^N k(s,\tSE_m) u(\tSE_m)\wSE_m(s).
\]
With these approximations, we have a new (approximated) equation
\begin{equation}
\uSE(t)=u_a+\sum_{j=-N}^N
\{g(\tSE_j)+\mu(\tSE_j)\uSE(\tSE_j)+\VolSE[\uSE](\tSE_j)\}\wSE_j(t).
\label{eq:SE-Sinc-Nyst-sol}
\end{equation}
The approximated solution $\uSE$ is obtained
if we determine the values
$\vecSE{u}=[\uSE(\tSE_{-N}),\,\uSE(\tSE_{-N+1}),$
$\,\ldots,\,\uSE(\tSE_{N})]^{\trans}$,
where $n=2N+1$.
For this purpose,
we discretize~\eqref{eq:SE-Sinc-Nyst-sol}
at the so-called SE-Sinc points $t=\tSE_i$
$(i=-N,\,\ldots,\,N)$,
which leads to a linear system with respect to $\vecSE{u}$.
Here, let $\matSE{K}$, $\InZero$, and $\InMinus$ be
$n\times n$ matrices whose $(i,\,j)$th elements
$(i,\,j=-N,\,\ldots,\,N)$ are
\[
 (\matSE{K})_{ij}=k(\tSE_i,\tSE_j),
\quad
 (\InZero)_{ij}=\delta_{ij},
\quad
 (\InMinus)_{ij}=\delta_{ij}^{(-1)},
\]
where $\delta_{ij}$ denotes the Kronecker delta, and
$\delta_{ij}^{(-1)}$ is defined by
\[
 \delta_{ij}^{(-1)}=\frac{1}{2} + \frac{1}{\pi}\Si[\pi(i-j)].
\]
Let $\matSE{M}$, $\matSE{D}$, and $\matSE{W}$
be $n\times n$ matrices defined by
\begin{align*}
 \matSE{M}&=\diag[\mu(\tSE_{-N}),\,\ldots,\,\mu(\tSE_N)],\\
 \matSE{D}&=\diag[\SEtDiv(-Nh),\,\ldots,\,\SEtDiv(Nh)],\\
 \matSE{W}&= h\InMinus \matSE{M} \matSE{D}
+ h^2 \InMinus \matSE{D} (\InMinus\circ\matSE{K})\matSE{D},
\end{align*}
where `$\circ$' denotes the Hadamard product.
Then, the linear system to be solved is written in matrix-vector
form as
\begin{equation}
(\InZero - \matSE{W}) \vecSE{u}
=\vecSE{g},
\label{eq:linear-system-SE}
\end{equation}
where $\vecSE{g}$ is an $n$-dimensional vector defined by
\[
 \vecSE{g}=
\left[u_a + \sum_{j=-N}^Ng(\tSE_j)\wSE_j(\tSE_{-N}),\,\ldots,\,
u_a + \sum_{j=-N}^Ng(\tSE_j)\wSE_j(\tSE_N)
\right]^{\trans}.
\]
By solving system~\eqref{eq:linear-system-SE}, $\uSE$ can be determined by
the right-hand side of~\eqref{eq:SE-Sinc-Nyst-sol}.
This is the SE-Sinc-Nystr\"{o}m method derived
by Zarebnia~\cite{2010Zarebnia}.

\begin{remark}
Assume that $\vecSE{u}$ is obtained from~\eqref{eq:linear-system-SE}.
In view of~\eqref{eq:SE-Sinc-Nyst-sol},
one might think that the approximate solution $\uSE$
requires $\Order(n^2)$ to evaluate for each $t$
because it contains the double sum $\sum_j (\VolSE \uSE) \wSE_j$.
However, if vector
$\tuSE=h(\InMinus\circ \matSE{K})\matSE{D}\vecSE{u}$ is computed
before the evaluation,
the term can be computed with $\Order(n)$
by $[\wSE_{-N}(t),\,\ldots,\,\wSE_N(t)]^{\trans}\tuSE$
for each $t$.
\end{remark}

For the SE-Sinc-Nystr\"{o}m method,
the following error analysis was given.

\begin{theorem}[Zarebnia~{\cite[Theorem~2]{2010Zarebnia}}]
\label{thm:Zarebnia}
Let $gQ$, $\mu u Q$, $(\Vol u) Q$ belong to $\LC_{\alpha}(\SEt(\domD_d))$.
Furthermore, let
$k(s,\cdot)u(\cdot)Q(\cdot)\in\LC_{\alpha}(\SEt(\domD_d))$
for all $s\in[a,\,b]$.
Then, there exists a constant $C$ independent of $N$
such that
\begin{equation}
\sup_{t\in(a,\,b)}|u(t)-\uSE(t)|
\leq C \|(\InZero - \matSE{W})^{-1}\|_2 \sqrt{N}\rme^{-\sqrt{\pi d \alpha N}}.
\label{leq:Zarebnia-error-theorem}
\end{equation}
\end{theorem}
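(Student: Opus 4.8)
The plan is to introduce an intermediate function obtained by feeding the \emph{exact} solution values into the numerical scheme, bound the resulting consistency error by Theorem~\ref{Thm:SE-Sinc-Indef}, and then transfer this bound to the node errors through the stability of the linear system~\eqref{eq:linear-system-SE}. Concretely, set
\[
\hat{u}_N(t) = u_a + \sum_{j=-N}^N\{g(\tSE_j) + \mu(\tSE_j)u(\tSE_j) + \VolSE[u](\tSE_j)\}\wSE_j(t),
\]
in which the exact values $u(\tSE_j)$ replace the unknowns $\uSE(\tSE_j)$, and put $\rho(t)=u(t)-\hat{u}_N(t)$. Subtracting $\hat{u}_N$ from the representation~\eqref{eq:real-target} splits $\rho$ into (i) the SE-Sinc indefinite-integration error for the integrand $g+\mu u+\Vol[u]$, and (ii) the quantity $\sum_j\{\Vol[u](\tSE_j)-\VolSE[u](\tSE_j)\}\wSE_j(t)$, whose inner factor is the SE-Sinc indefinite-integration error for $k(\tSE_j,\cdot)u(\cdot)$. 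Applying Theorem~\ref{Thm:SE-Sinc-Indef}---to $g+\mu u+\Vol[u]$ for (i) via $(gQ),(\mu u Q),((\Vol u)Q)\in\LC_{\alpha}(\SEt(\domD_d))$, and to $k(s,\cdot)u(\cdot)$ for (ii) via $k(s,\cdot)u(\cdot)Q(\cdot)\in\LC_{\alpha}(\SEt(\domD_d))$---bounds each error by $C\rme^{-\sqrt{\pi d\alpha N}}$ uniformly; the summation against $\wSE_j(t)$ in (ii) is harmless because $\sum_{j=-N}^N|\wSE_j(t)|$ is bounded uniformly in $N$, a Lebesgue-constant-type estimate following from the uniform boundedness of $J(j,h)/h$ and the boundedness of the Riemann sum $h\sum_{j=-N}^N\SEtDiv(jh)$ approximating $\int_{\mathbb{R}}\SEtDiv(x)\diff x=b-a$. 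Hence $\sup_{t}|\rho(t)|\leq C\rme^{-\sqrt{\pi d\alpha N}}$.

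Evaluating $\rho$ at the nodes $t=\tSE_i$ and writing $\hat{u}_N(\tSE_i)$ in matrix form reproduces exactly $(\vecSE{g}+\matSE{W}\mathbd{u})_i$, where $\mathbd{u}=[u(\tSE_{-N}),\ldots,u(\tSE_N)]^{\trans}$ collects the exact node values. Thus $(\InZero-\matSE{W})\mathbd{u}=\vecSE{g}+\mathbd{\rho}$ with $\mathbd{\rho}=[\rho(\tSE_{-N}),\ldots,\rho(\tSE_N)]^{\trans}$, and subtracting the defining system~\eqref{eq:linear-system-SE} for the computed vector $\vecSE{u}$ gives $(\InZero-\matSE{W})(\mathbd{u}-\vecSE{u})=\mathbd{\rho}$. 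Therefore the node-error vector $\mathbd{e}=\mathbd{u}-\vecSE{u}$ satisfies $\|\mathbd{e}\|_2\leq\|(\InZero-\matSE{W})^{-1}\|_2\|\mathbd{\rho}\|_2$, and since $\|\mathbd{\rho}\|_2\leq\sqrt{n}\max_i|\rho(\tSE_i)|$ with $n=2N+1$, the consistency bound yields $\|\mathbd{e}\|_2\leq C\|(\InZero-\matSE{W})^{-1}\|_2\sqrt{N}\,\rme^{-\sqrt{\pi d\alpha N}}$. This is the origin of both the $\sqrt{N}$ and the $\|(\InZero-\matSE{W})^{-1}\|_2$ factors.

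Finally I would estimate $\sup_t|u(t)-\uSE(t)|\leq\sup_t|\rho(t)|+\sup_t|\hat{u}_N(t)-\uSE(t)|$. The second term depends on $u$ and $\uSE$ only through their node values, so it is a linear combination of the entries of $\mathbd{e}$ with coefficients built from $\mu$, $k$, and the products $\wSE_m(\tSE_j)\wSE_j(t)$; bounding $|e_m|\leq\|\mathbd{e}\|_2$ and reusing the uniform boundedness of $\sum_j|\wSE_j(t)|$ and $\sum_m|\wSE_m(\tSE_j)|$ together with the boundedness of $\mu$ and $k$ yields $\sup_t|\hat{u}_N(t)-\uSE(t)|\leq C\|\mathbd{e}\|_2$, and combining the estimates gives the claim. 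I expect the main obstacle to lie in the consistency analysis: cleanly separating $\rho$ into its outer and inner indefinite-integration errors and verifying the Lebesgue-constant estimate $\sum_j|\wSE_j(t)|=\Order(1)$, which is exactly what prevents the double summation from contributing extra powers of $N$. The stability reduction through $\|(\InZero-\matSE{W})^{-1}\|_2$ is conceptually central but essentially immediate once the residual system is in place.
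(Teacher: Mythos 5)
Your reconstruction is essentially correct, but you should know that this paper does not prove Theorem~\ref{thm:Zarebnia} at all: it is imported from Zarebnia~\cite{2010Zarebnia}, and the stated purpose of Section~\ref{sec:proofs} is to supersede exactly the style of argument you give. Your route is the classical matrix-level Nystr\"{o}m analysis: insert the exact node values to form $\hat{u}_N$, bound the consistency error $\rho$ by Theorem~\ref{Thm:SE-Sinc-Indef}, derive the residual system $(\InZero-\matSE{W})(\mathbd{u}-\vecSE{u})=\mathbd{\rho}$, and pay $\sqrt{n}$ to pass from $\|\mathbd{\rho}\|_{\infty}$ to $\|\mathbd{\rho}\|_2$; this correctly explains where both the $\sqrt{N}$ and the $\|(\InZero-\matSE{W})^{-1}\|_2$ factors come from, and it is surely close in spirit to Zarebnia's own proof. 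The paper instead works at the operator level: it writes the exact and discrete equations as $(\Ident-\mathcal{W})u=G$ and $(\Ident-\calSE{W})\uSE=\gSE$, invokes Atkinson's perturbation theorem (Theorem~\ref{thm:Atkinson-Nystroem}) with the key estimate $\|(\mathcal{W}-\calSE{W})\calSE{W}\|_{\mathcal{L}(\Csp,\Csp)}=\Order(h)$ to obtain a \emph{uniform} bound on $\|(\Ident-\calSE{W})^{-1}\|_{\mathcal{L}(\Csp,\Csp)}$, and thereby proves both the invertibility of $(\InZero-\matSE{W})$ and the clean rate $\Order(\rme^{-\sqrt{\pi d\alpha N}})$ of Theorem~\ref{thm:SE-converge}, with neither nuisance factor. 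Two details you pass over quickly, and which are precisely why the paper's Theorem~\ref{thm:SE-converge} carries extra continuity hypotheses on $\mu$ and $k$: applying Theorem~\ref{Thm:SE-Sinc-Indef} to $k(\tSE_j,\cdot)u(\cdot)$ uniformly in $j$ needs the constant $K$ in~\eqref{Leq:LC-bounded-by-Q} to be uniform in the first argument, and your weighted Lebesgue-constant bounds $\sum_j|\mu(\tSE_j)|\,|\wSE_j(t)|=\Order(1)$ and $\sum_m|k(\tSE_j,\tSE_m)|\,|\wSE_m(\tSE_j)|=\Order(1)$ require boundedness (or at least Riemann integrability) of $\mu$ and $k$ on $[a,\,b]$, which the hypotheses $\mu uQ\in\LC_{\alpha}(\SEt(\domD_d))$ and $k(s,\cdot)u(\cdot)Q(\cdot)\in\LC_{\alpha}(\SEt(\domD_d))$ do not literally supply.
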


\subsection{Two points to be discussed on the existing results}
\label{subsec:two-points}

The first point to be discussed is
the assumptions on solution $u$.
The scheme above is derived
under the assumptions that
$gQ$, $\mu u Q$, $(\Vol u)Q$, and $k(s,\cdot)u(\cdot)Q(\cdot)$
belong to $\LC_{\alpha}(\SEt(\domD_d))$.
In a practical situation, however,
$u$ is an \emph{unknown} function to be solved,
and for this reason, it is impossible to check the assumptions,
at least in a simple way.
%To make matters worse,
%``it belongs to $\LC_{\alpha}(\SEt(\domD_d))$ for some $d$''
Furthermore, as for parameter $d$, the statement
``there exists a constant $d$''
is not sufficient; we need the concrete value of $d$ to launch the scheme.
%In order to launch the scheme, the
%user must find the value of $d$,
This is
because $d$ is used in the formula for mesh size $h$ in~\eqref{Def:h-SE}.
Therefore, some sort of remedy is needed
to apply this scheme in practice.

The second point to be discussed is
the solvability and convergence of the scheme.
%In Theorem~\ref{thm:Zarebnia},
In~\eqref{leq:Zarebnia-error-theorem},
there exists
the matrix norm of $(\InZero - \matSE{W})^{-1}$,
which clearly depends on $N$.
However, no theoretical estimate of this term
has yet been given.
Therefore, exponential convergence of the scheme is not guaranteed
in a rigorous sense.
In addition, the
invertibility of $(\InZero - \matSE{W})$ is implicitly
assumed in Theorem~\ref{thm:Zarebnia},
but it is not clear and should be proved as part of proving
the scheme's solvability.

\subsection{Theoretical contributions of this paper on these two points}

Let us now introduce the following function space.
\begin{definition}
\label{Def:Hinf}
Let $\domD$ be a bounded and simply connected domain (or Riemann surface).
Then, $\Hinf(\domD)$
denotes the family of functions $f$ that are analytic on $\domD$ and
such that the norm $\|f\|_{\Hinf(\domD)}$ is finite,
where
\[
 \|f\|_{\Hinf(\domD)} = \sup_{z\in\domD}|f(z)|.
\]
\end{definition}

For the first point,
this paper presents the following theorem; the proof is given in Section~\ref{sec:proof-1st}.

\begin{theorem}
\label{thm:rewrite-SE-assump}
Let $gQ$ and $\mu Q$ belong to $\LC_{\alpha}(\SEt(\domD_d))$
for $d$ with $0<d<\pi$.
Moreover, let $k(z,\cdot)Q(\cdot)\in\LC_{\alpha}(\SEt(\domD_d))$
for all $z\in\SEt(\domD_d)$
and let $k(\cdot,w)Q(w)\in\Hinf(\SEt(\domD_d))$
for all $w\in\SEt(\domD_d)$.
Then, all the assumptions
in Theorem~\ref{thm:Zarebnia} are fulfilled.
\end{theorem}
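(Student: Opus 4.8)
The plan is to derive each of the four hypotheses of Theorem~\ref{thm:Zarebnia} from the stated assumptions, the linchpin being that the unknown solution $u$ itself lies in $\Hinf(\SEt(\domD_d))$. I would first record the elementary \emph{product property}: if $f\in\LC_{\alpha}(\domD)$ and $\phi\in\Hinf(\domD)$, then $f\phi\in\LC_{\alpha}(\domD)$, since $|f(z)\phi(z)|\le K\|\phi\|_{\Hinf(\domD)}|Q(z)|^{\alpha}$. Granting for the moment that $u\in\Hinf(\SEt(\domD_d))$, three of the four required memberships are immediate: $gQ\in\LC_{\alpha}$ is assumed outright; $\mu u Q=(\mu Q)\,u$ is a product of $\mu Q\in\LC_{\alpha}$ with $u\in\Hinf$; and for each fixed $s$, $k(s,\cdot)u(\cdot)Q(\cdot)=[k(s,\cdot)Q(\cdot)]\,u(\cdot)$ is a product of $k(s,\cdot)Q(\cdot)\in\LC_{\alpha}$ with $u\in\Hinf$. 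Thus the problem splits into two genuine tasks: (i) prove $u\in\Hinf(\SEt(\domD_d))$, and (ii) prove $(\Vol u)Q\in\LC_{\alpha}(\SEt(\domD_d))$ directly.

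For task (i) I would work from the integrated equation~\eqref{eq:real-target}, writing $u=f_0+\mathcal{L}u$, where $f_0(z)=u_a+\int_a^z g(s)\diff s$ and $\mathcal{L}$ is the Volterra-type operator $(\mathcal{L}v)(z)=\int_a^z\mu(s)v(s)\diff s+\int_a^z\int_a^s k(s,r)v(r)\diff r\diff s$. Because $|g(s)|\le K|Q(s)|^{\alpha-1}$ has only an integrable endpoint singularity, $f_0$ is analytic and bounded, so $f_0\in\Hinf(\SEt(\domD_d))$; the same weak-singularity bound shows $\mathcal{L}$ maps $\Hinf$ into itself. I would then solve by the Neumann series $u=\sum_{n\ge0}\mathcal{L}^n f_0$ and prove its convergence in $\Hinf(\SEt(\domD_d))$ by exploiting the causal (Volterra) structure. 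Concretely, transferring to the strip $\domD_d$ via $\zeta=\SEtInv(z)$ and integrating along horizontal rays, the real part $\Re\zeta$ serves as a monotone ``time'' running from $-\infty$ (the endpoint $a$) up to $\Re\zeta$; the $n$-fold iterate is then an integral over a nested simplex in this parameter, which yields the factorial bound $\|\mathcal{L}^n f_0\|_{\Hinf}\le \tilde{L}^{\,n}/n!\,\|f_0\|_{\Hinf}$ for a finite constant $\tilde L$ coming from $\sup_z\int_{\gamma_z}|Q|^{\alpha-1}$. Summability of the series then gives an analytic, bounded $u$, which by uniqueness of the Volterra solution agrees with the real solution on $(a,\,b)$.

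For task (ii) I would show first that $\Vol[u]\in\Hinf(\SEt(\domD_d))$ and then upgrade to $\LC_{\alpha}$. Boundedness follows from $|\Vol[u](z)|\le\|u\|_{\Hinf}\int_{\gamma_z}|k(z,r)|\,|\diff r|$ together with the second-argument bound $|k(z,r)|\le \hat K|Q(r)|^{\alpha-1}$ (whose integrability over $\gamma_z$ is uniform in $z$), while analyticity of $z\mapsto\Vol[u](z)$ follows by differentiating under the integral sign, which is licensed by the first-argument assumption $k(\cdot,w)Q(w)\in\Hinf$. Finally, since $0<\alpha\le1$ makes $|Q(z)|^{1-\alpha}$ bounded on the bounded domain $\SEt(\domD_d)$, we obtain $|(\Vol u)(z)Q(z)|=|\Vol[u](z)|\,|Q(z)|\le (\sup|Q|^{1-\alpha})\,\|\Vol[u]\|_{\Hinf}\,|Q(z)|^{\alpha}$, i.e. $(\Vol u)Q\in\LC_{\alpha}$.

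The main obstacle is task (i), and within it the rigorous factorial estimate for the Neumann series on the complex eye-shaped domain: one must choose the integration paths so that the two Volterra operators share a single monotone ordering parameter, and control the iterated weakly singular kernels uniformly up to the endpoints. A secondary but real technical point is extracting the \emph{uniform} kernel bound $|k(z,w)Q(w)|\le\hat K|Q(w)|^{\alpha}$ used in both tasks; this is precisely where the two-sided hypothesis on $k$ (an $\LC_{\alpha}$ bound in the second argument for every $z$, together with an $\Hinf$ bound in the first argument for every $w$) must be combined, and I would expect the argument there to require the most care.
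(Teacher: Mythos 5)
Your proposal is correct in its overall architecture and it matches the paper's key insight exactly: everything reduces to showing $u\in\Hinf(\SEt(\domD_d))$, after which $\mu uQ$ and $k(s,\cdot)u(\cdot)Q(\cdot)$ are handled by the trivial product property $\LC_{\alpha}\cdot\Hinf\subseteq\LC_{\alpha}$, and $(\Vol u)Q$ by showing $\Vol u\in\Hinf$ and using $\LC_1(\domD)\subseteq\LC_{\alpha}(\domD)$ for $\alpha\le 1$ on the bounded domain. Where you diverge is in how the regularity $u\in\Hinf$ is obtained. The paper does not run a Neumann series itself: it first rewrites the integrated equation~\eqref{eq:real-target} by Fubini as a single Volterra integral equation of the second kind with kernel $K(t,s)=\mu(s)+\tilde{k}(t,s)$, where $\tilde{k}(t,s)=\int_s^t k(r,s)\diff r$, and right-hand side $G(t)=u_a+\int_a^t g(s)\diff s$, and then simply verifies the hypotheses of the cited regularity theorem for such equations (Theorem~\ref{thm:Vol-sol-regularity}, quoted from Okayama et al.): $G\in\Hinf$ because $\int_a^tg(s)\diff s=\Vol[g]$ with kernel $\equiv 1$ and $\Vol$ maps into $\Hinf$; $\tilde{k}(z,\cdot)Q(\cdot)\in\LC_{\alpha}$ and $\tilde{k}(\cdot,w)Q(w)\in\Hinf$ follow from the identity $\tilde{k}(z,w)Q(w)=\int_w^z k(r,w)Q(w)\diff r$. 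Your plan amounts to re-proving that cited theorem from scratch (Neumann series on the eye-shaped domain, ordering by $\Re\SEtInv(z)$ along paths, factorial bounds for iterated kernels with integrable endpoint singularities), and moreover with the sum of two causal operators rather than the single Fubini-collapsed kernel; that is workable but considerably heavier, and the two technical obstacles you flag (the simplex/factorial estimate on the complex domain, and extracting a $z$-uniform bound on $k(z,\cdot)Q(\cdot)$) are exactly what the citation absorbs. The paper's route buys brevity and a clean division of labor; yours buys self-containedness at the cost of redoing the hardest analysis, and if you pursue it you should perform the Fubini rewrite first so that only one Volterra kernel needs iterating.
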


From this theorem, we can see that it is no longer necessary to check the assumptions
on solution $u$;
this is quite a useful result for applications.

For the second point,
this paper presents the following theorem;
the proof is given in Section~\ref{sec:proof-2nd}.

\begin{theorem}
\label{thm:SE-converge}
Let the assumptions
in Theorem~\ref{thm:rewrite-SE-assump} be fulfilled.
Furthermore, let $\mu$,
$k(t,\cdot)Q(\cdot)$, $k(\cdot,s)Q(s)$
belong to $C([a,\,b])$ for all $t\in[a,\,b]$
and $s\in[a,\,b]$.
Then, there exists a positive integer $N_0$
such that for all $N\geq N_0$,
the inverse of $(\InZero - \matSE{W})$ exists,
and there exists a constant $C$ independent of $N$ such that
\[
 \max_{t\in[a,\,b]} |u(t)-\uSE(t)|
\leq C \rme^{-\sqrt{\pi d \alpha N}}.
\]
\end{theorem}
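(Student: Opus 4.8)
The plan is to recast both the exact equation~\eqref{eq:real-target} and the discrete scheme~\eqref{eq:SE-Sinc-Nyst-sol} as second-kind operator equations on $C([a,\,b])$ and then to invoke the theory of collectively compact operator approximation, thereby replacing the matrix-norm factor $\|(\InZero-\matSE{W})^{-1}\|_2$ of Theorem~\ref{thm:Zarebnia} by a \emph{uniform} bound on the inverse of an operator. First I would introduce the Volterra operator
\[
 \mathcal{T}[v](t)=\int_a^t\Bigl\{\mu(s)v(s)+\int_a^s k(s,r)v(r)\diff r\Bigr\}\diff s
\]
together with $\phi(t)=u_a+\int_a^t g(s)\diff s$, so that~\eqref{eq:real-target} reads $(\Ident-\mathcal{T})u=\phi$. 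Since $\mathcal{T}$ is a Volterra operator its spectral radius is $0$, whence $1\notin\sigma(\mathcal{T})$ and $(\Ident-\mathcal{T})^{-1}$ exists and is bounded on $C([a,\,b])$; under the stated continuity hypotheses $\mathcal{T}$ is moreover compact. In parallel I would define the finite-rank operator $\mathcal{T}_N$ obtained by replacing each indefinite integral by its SE-Sinc counterpart, so that~\eqref{eq:SE-Sinc-Nyst-sol} becomes $(\Ident-\mathcal{T}_N)\uSE=\phi_N$, and record that invertibility of $(\InZero-\matSE{W})$ is equivalent to that of $(\Ident-\mathcal{T}_N)$, because $\mathcal{T}_N$ is a rank-$\leq n$ perturbation of the identity determined entirely by its values at the Sinc points.

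Next I would verify the two hypotheses of the collectively compact framework. For \emph{pointwise convergence} $\mathcal{T}_N v\to\mathcal{T}v$ for every $v\in C([a,\,b])$, the key auxiliary estimate is the uniform Sinc ``Lebesgue bound'' $\sup_{t}\sum_{j=-N}^N|\wSE_j(t)|\leq C$: writing $|\wSE_j(t)|\leq C'h\,\SEtDiv(jh)$ and recognising $\sum_j h\,\SEtDiv(jh)$ as a convergent Riemann sum for $\int_{\mathbb{R}}\SEtDiv=b-a$ gives the claim uniformly in $N$. By Theorem~\ref{Thm:SE-Sinc-Indef} the SE-Sinc indefinite integration converges for the class of integrands $f$ with $fQ\in\LC_{\alpha}(\SEt(\domD_d))$, which contains all polynomials and is therefore dense in $C([a,\,b])$; combined with the Lebesgue bound this yields convergence for every continuous integrand, and the assumed continuity of $\mu$, $k(t,\cdot)Q(\cdot)$ and $k(\cdot,s)Q(s)$ promotes it to the full operator, including the uniform convergence $\VolSE[v]\to\Vol[v]$ of the inner approximation. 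For \emph{collective compactness} of $\{\mathcal{T}_N\}$ I would apply Arzel\`a--Ascoli: uniform boundedness of $\{\mathcal{T}_N v:\|v\|_{\infty}\leq1\}$ is again a consequence of the Lebesgue bound together with the boundedness of $\mu$ and $k$, while the equicontinuity, uniform in $N$, must be extracted from the indefinite-integral structure of the basis functions $\wSE_j$.

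With these hypotheses in place, the perturbation theorem of the collectively compact theory (Anselone's theorem) furnishes an $N_0$ beyond which $(\Ident-\mathcal{T}_N)^{-1}$ exists with $\sup_{N\geq N_0}\|(\Ident-\mathcal{T}_N)^{-1}\|<\infty$; this simultaneously settles solvability (invertibility of $\InZero-\matSE{W}$) and removes the unbounded factor in~\eqref{leq:Zarebnia-error-theorem}. I would then close the estimate from the identity
\[
 (\Ident-\mathcal{T}_N)(u-\uSE)=(\phi-\phi_N)+(\mathcal{T}-\mathcal{T}_N)u,
\]
bounding each consistency term by Theorem~\ref{Thm:SE-Sinc-Indef} applied to $g$, $\mu u$, $\Vol u$, and $k(\tSE_i,\cdot)u(\cdot)$ (the last summed against $\wSE_j$ and controlled by the Lebesgue bound), every term being of order $\rme^{-\sqrt{\pi d\alpha N}}$ with no algebraic prefactor, which yields the asserted rate. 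I expect the main obstacle to be the collective-compactness step: the termwise $t$-derivatives of $\sum_j f(\tSE_j)\wSE_j(t)$ are not bounded uniformly in $N$ near the endpoints $a$ and $b$, so the equicontinuity estimate cannot rest on crude derivative bounds and must instead exploit the fact that the formula is an \emph{indefinite integral} of a Sinc interpolant. Securing in this way the uniform bound on $\|(\Ident-\mathcal{T}_N)^{-1}\|$ that eluded the matrix-analytic approach of~\cite{2010Zarebnia} is the crux of the entire argument.
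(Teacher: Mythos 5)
Your overall architecture --- recasting \eqref{eq:real-target} and \eqref{eq:SE-Sinc-Nyst-sol} as second-kind operator equations on $\Csp$, using the Volterra structure to get boundedness of $(\Ident-\mathcal{W})^{-1}$, reducing invertibility of $(\InZero-\matSE{W})$ to invertibility of the operator, and closing via the identity $u-\uSE=(\Ident-\calSE{W})^{-1}\{(\mathcal{W}u-\calSE{W}u)+(G-\gSE)\}$ with Theorem~\ref{Thm:SE-Sinc-Indef} applied to $g$, $\mu u$, $\Vol u$ and $k(\cdot,\cdot)u$ --- coincides with the paper's. The consistency estimates and the Lebesgue-type bound $\sum_j|\wSE_j(t)|\le C$ are also as in the paper (and, as there, they need $u\in\Hinf(\SEt(\domD_d))$, which is exactly what Theorems~\ref{thm:rewrite-SE-assump} and~\ref{thm:SE-sol-regularity} supply). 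The gap is in the step you yourself flag as ``the crux'': you propose to obtain the uniform bound on $(\Ident-\calSE{W})^{-1}$ from Anselone's collectively compact perturbation theorem, which requires equicontinuity of $\{\calSE{W}v:\ N\in\mathbb{N},\ \|v\|_{\Csp}\le 1\}$ uniformly in $N$, and you leave that unproved. This is not a routine verification: the ``kernel'' of $\calSE{W}$ is built from $J(j,h)(\SEtInv(\cdot))$ and changes with $N$, the termwise $t$-derivatives grow like $\log N$, and a modulus-of-continuity bound of the form $C|t-t'|\log N$ is \emph{not} equicontinuity uniform in $N$. So the solvability and the uniform boundedness of the inverse --- precisely the quantities missing from Theorem~\ref{thm:Zarebnia} --- are not actually established by your argument.

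The paper avoids collective compactness altogether. It invokes Atkinson's Theorem~\ref{thm:Atkinson-Nystroem}, whose operative hypothesis is only $\|(\mathcal{W}-\calSE{W})\calSE{W}\|_{\mathcal{L}(\Csp,\Csp)}<1/\|(\Ident-\mathcal{W})^{-1}\|_{\mathcal{L}(\Csp,\Csp)}$, together with compactness of each single finite-rank operator $\calSE{W}$. The product norm is then bounded \emph{directly} (Lemma~\ref{Lem:SE-Sinc-converge-main}): $(\mathcal{W}-\calSE{W})\calSE{W}[f](t)$ is written as a sum of terms $\SEtDiv(jh)E_j(t)$, where $E_j$ is the SE-Sinc indefinite-integration error for the $N$-dependent integrand $F_j(s)=\mu(s)J(j,h)(\SEtInv(s))+\int_a^s k(s,r)J(j,h)(\SEtInv(r))\diff r$. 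Since $J(j,h)\circ\SEtInv$ is analytic on $\SEt(\domD_d)$ and satisfies $|J(j,h)(x+\imnum y)|\le\frac{5h}{\pi}\cdot\frac{\sinh(\pi y/h)}{\pi y/h}$ (Lemma~\ref{Lem:Bound-J-Complex}), Theorem~\ref{Thm:SE-Sinc-Indef} applies to $F_j$, and the growth factor $\sinh(\pi d/h)$ is absorbed by the decay $\rme^{-\sqrt{\pi d\alpha N}}=\rme^{-\pi d/h}$, yielding $E_j(t)=\Order(h^2)$ and hence $\|(\mathcal{W}-\calSE{W})\calSE{W}\|_{\mathcal{L}(\Csp,\Csp)}=\Order(h)$. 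This analyticity argument is the idea your proposal is missing; to complete your route you would have to either prove the uniform equicontinuity you postponed or replace that step by this direct estimate.
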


This theorem states the invertibility of matrix $(\InZero-\matSE{W})$,
and it rigorously assures the exponential convergence of $\uSE$.

\section{DE-Sinc-Nystr\"{o}m method}
\label{sec:DE-Sinc-Nystroem}

\subsection{Derivation of the scheme}

The way the DE-Sinc-Nystr\"{o}m method is derived
is quite similar to that for the SE-Sinc-Nystr\"{o}m method.
The important difference between the two is
the variable transformation;
the SE transformation in the previous scheme
is replaced with the DE transformation.

Consider an approximation of the integrals in~\eqref{eq:real-target}
according to Theorem~\ref{Thm:DE-Sinc-Indef}.
Let $0<\alpha\leq 1$,
and let $g Q$, $\mu u Q$, and $(\Vol u) Q$
belong to $\LC_{\alpha}(\DEt(\domD_d))$.
Furthermore, let
$k(s,\cdot)u(\cdot)Q(\cdot)\in\LC_{\alpha}(\DEt(\domD_d))$
for all $s\in [a,\,b]$.
Then, in a similar manner to the SE-Sinc-Nystr\"{o}m method,
we obtain a new equation
\begin{equation}
\uDE(t)=u_a+\sum_{j=-N}^N
\{g(\tDE_j)+\mu(\tDE_j)\uDE(\tDE_j)+\VolDE[\uDE](\tDE_j)\}\wDE_j(t).
\label{eq:DE-Sinc-Nyst-sol}
\end{equation}
The approximated solution $\uDE$ is obtained
if we determine the values
$\vecDE{u}=[\uDE(\tDE_{-N}),\,\uDE(\tDE_{-N+1}),$
$\ldots,\,\uDE(\tDE_{N})]^{\trans}$.
For this purpose,
we discretize~\eqref{eq:DE-Sinc-Nyst-sol}
at the so-called DE-Sinc points $t=\tDE_i$
$(i=-N,\,\ldots,\,N)$,
which leads to a linear system with respect to $\vecDE{u}$.
Here, let $\matDE{K}$ be
an $n\times n$ matrix whose $(i,\,j)$th element
$(i,\,j=-N,\,\ldots,\,N)$ is
\[
 (\matDE{K})_{ij}=k(\tDE_i,\tDE_j).
\]
Let $\matDE{M}$, $\matDE{D}$, and $\matDE{W}$
be $n\times n$ matrices defined by
\begin{align*}
 \matDE{M}&=\diag[\mu(\tDE_{-N}),\,\ldots,\,\mu(\tDE_N)],\\
 \matDE{D}&=\diag[\DEtDiv(-Nh),\,\ldots,\,\DEtDiv(Nh)],\\
 \matDE{W}&= h\InMinus \matDE{M} \matDE{D}
+ h^2 \InMinus \matDE{D} (\InMinus\circ\matDE{K})\matDE{D}.
\end{align*}
Then, the linear system to be solved is written in matrix-vector
form as
\begin{equation}
(\InZero-\matDE{W}) \vecDE{u}
=\vecDE{g},
\label{eq:linear-system-DE}
\end{equation}
where $\vecDE{g}$ is an $n$-dimensional vector defined by
\[
 \vecDE{g}=
\left[u_a + \sum_{j=-N}^Ng(\tDE_j)\wDE_j(\tDE_{-N}),\,\ldots,\,
u_a + \sum_{j=-N}^Ng(\tDE_j)\wDE_j(\tDE_N)
\right]^{\trans}.
\]
By solving system~\eqref{eq:linear-system-DE}, $\uDE$ can be determined by
the right-hand side of~\eqref{eq:DE-Sinc-Nyst-sol}.
This is the DE-Sinc-Nystr\"{o}m method.

\subsection{Theoretical results corresponding to the two points in Section~\ref{sec:SE-Sinc-Nystroem}}

With respect to the first point,
this paper presents the following theorem.
The proof is given in Section~\ref{sec:proof-1st}.

\begin{theorem}
\label{thm:rewrite-DE-assump}
Let $gQ$ and $\mu Q$ belong to $\LC_{\alpha}(\DEt(\domD_d))$
for $d$ with $0<d<\pi/2$.
Moreover, let $k(z,\cdot)Q(\cdot)\in\LC_{\alpha}(\DEt(\domD_d))$
for all $z\in\DEt(\domD_d)$
and $k(\cdot,w)Q(w)\in\Hinf(\DEt(\domD_d))$
for all $w\in\DEt(\domD_d)$.
Then, $g Q$, $\mu u Q$, and $\Vol [u] Q$
belong to $\LC_{\alpha}(\DEt(\domD_d))$.
Furthermore,
$k(s,\cdot)u(\cdot)Q(\cdot)\in\LC_{\alpha}(\DEt(\domD_d))$
for all $s\in [a,\,b]$.
\end{theorem}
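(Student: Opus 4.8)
The plan is to reduce all four assertions to the single regularity statement $u\in\Hinf(\DEt(\domD_d))$, and then to read them off by product estimates. Assume for the moment that $u$ is analytic and bounded on $\DEt(\domD_d)$. From $\mu Q\in\LC_\alpha(\DEt(\domD_d))$ we have $|\mu(z)Q(z)|\le K_\mu|Q(z)|^\alpha$, so $|\mu(z)u(z)Q(z)|\le\|u\|_{\Hinf(\DEt(\domD_d))}K_\mu|Q(z)|^\alpha$, giving $\mu uQ\in\LC_\alpha(\DEt(\domD_d))$; the same multiplication applied to $k(s,\cdot)Q(\cdot)\in\LC_\alpha(\DEt(\domD_d))$ (the hypothesis with $z=s$) yields $k(s,\cdot)u(\cdot)Q(\cdot)\in\LC_\alpha(\DEt(\domD_d))$ for every $s\in[a,\,b]$. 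For the Volterra term, $|k(z,r)|\le K|Q(r)|^{\alpha-1}$ and $u\in\Hinf$ give $|\Vol[u](z)|\le K\|u\|_{\Hinf(\DEt(\domD_d))}\int_a^z|Q(r)|^{\alpha-1}\,|\divv r|$; since $\alpha>0$ the weight $|Q(r)|^{\alpha-1}$ is integrable, so $\Vol[u]$ is bounded, and since $\alpha\le1$ makes $|Q(z)|^{\alpha-1}$ bounded below by a positive constant on the bounded domain, this upgrades to $|\Vol[u](z)|\le K'|Q(z)|^{\alpha-1}$, i.e.\ $\Vol[u]\,Q\in\LC_\alpha(\DEt(\domD_d))$ (its analyticity following from differentiation under the integral sign). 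The membership $gQ\in\LC_\alpha(\DEt(\domD_d))$ is itself a hypothesis.

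It remains to prove $u\in\Hinf(\DEt(\domD_d))$. First I would integrate \eqref{eq:real-target} and interchange the order of integration in the iterated integral arising from $\Vol$ (Dirichlet's formula), recasting the problem as a Volterra equation of the second kind
\begin{equation*}
u(z)=G(z)+\int_a^z\kappa(z,r)u(r)\diff r,
\end{equation*}
with $G(z)=u_a+\int_a^z g(s)\diff s$ and $\kappa(z,r)=\mu(r)+\int_r^z k(s,r)\diff s$. From $gQ\in\LC_\alpha(\DEt(\domD_d))$ one gets $|g(s)|\le K_g|Q(s)|^{\alpha-1}$, which is integrable, so $G\in\Hinf(\DEt(\domD_d))$; the two hypotheses on $k$ combine with $\mu Q\in\LC_\alpha$ to give a uniform, integrable kernel bound $|\kappa(z,r)|\le C|Q(r)|^{\alpha-1}$. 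I would then solve by Picard iteration $u_0=G$, $u_{n+1}(z)=G(z)+\int_a^z\kappa(z,r)u_n(r)\diff r$: writing $P(z)=\int_a^z|Q(r)|^{\alpha-1}\,|\divv r|\le P(b)<\infty$, the Volterra structure yields the factorial bound $|u_{n+1}(z)-u_n(z)|\le\|G\|_{\Hinf(\DEt(\domD_d))}\,(CP(b))^{n+1}/(n+1)!$ by induction, so the Neumann series converges uniformly on $\DEt(\domD_d)$. Its limit is $u$, which is bounded by $\|G\|_{\Hinf(\DEt(\domD_d))}\rme^{CP(b)}$ and, being a uniform limit of analytic functions, analytic; hence $u\in\Hinf(\DEt(\domD_d))$.

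I expect the main obstacle to be twofold, both features distinguishing the DE case from the eye-shaped SE domain. First, since $\DEt(\domD_d)$ is an infinitely sheeted Riemann surface, the contours defining $G$, $\kappa$, and the Picard iterates must be chosen consistently and shown to be path-independent, so that $u$ is a single-valued analytic function on the surface. Second, and more delicate, is the uniform kernel bound: the hypothesis $k(z,\cdot)Q(\cdot)\in\LC_\alpha(\DEt(\domD_d))$ supplies a priori only a constant depending on the outer variable $z$, and it is exactly the supplementary hypothesis $k(\cdot,w)Q(w)\in\Hinf(\DEt(\domD_d))$ that must be invoked to make $|k(z,r)Q(r)|\le K|Q(r)|^\alpha$ hold uniformly, so that $|\int_r^z k(s,r)\diff s|\le C|Q(r)|^{\alpha-1}$ with the path length $|z-r|$ bounded. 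Once that uniform bound is secured, the remaining estimates above are routine.
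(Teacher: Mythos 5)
Your proposal is correct and follows essentially the same route as the paper: both reduce everything to the single claim $u\in\Hinf(\DEt(\domD_d))$ by rewriting \eqref{eq:real-target} as a second-kind Volterra integral equation with kernel $\mu(s)+\int_s^t k(r,s)\diff r$ and forcing term $u_a+\int_a^t g(s)\diff s$, and then obtain the four memberships by the same product estimates (with $(\Vol u)Q$ landing in $\LC_1\subseteq\LC_{\alpha}$ because $\Vol u$ is bounded). The only difference is that the paper outsources the regularity step to the cited Theorem~\ref{thm:Vol-sol-regularity} of Okayama et al., whereas you re-derive it by Picard iteration; the two obstacles you flag (path-independence on the Riemann surface $\DEt(\domD_d)$ and uniformity in the outer variable of the kernel bound) are precisely what that citation absorbs, so your sketch is a self-contained version of the same argument rather than a new one.
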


With respect to the second point,
this paper presents the following theorem.
The proof is given in Section~\ref{sec:proof-2nd}.

\begin{theorem}
\label{thm:DE-converge}
Let the assumptions
in Theorem~\ref{thm:rewrite-DE-assump} be fulfilled.
Furthermore, let $\mu$,
$k(t,\cdot)Q(\cdot)$, $k(\cdot,s)Q(s)$
belong to $C([a,\,b])$ for all $t\in[a,\,b]$
and $s\in[a,\,b]$.
Then, there exists a positive integer $N_0$
such that for all $N\geq N_0$,
the inverse of $(\InZero-\matDE{W})$ exists,
and there exists a constant $C$ independent of $N$ such that
\[
 \max_{t\in[a,\,b]} |u(t)-\uDE(t)|
\leq C \frac{\log(2 d N/\alpha)}{N}
\rme^{-\pi d N/\log(2 d N/\alpha)}.
\]
\end{theorem}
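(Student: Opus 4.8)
The plan is to recast the problem in operator form on $C([a,\,b])$ with the maximum norm and then invoke the theory of collectively compact operator approximation. Integrating as in~\eqref{eq:real-target}, write the exact problem as $(\Ident-\mathcal{T})u=\tilde{g}$, where $\tilde{g}(t)=u_a+\int_a^t g(s)\diff s$ and
\[
(\mathcal{T}v)(t)=\int_a^t\mu(s)v(s)\diff s+\int_a^t\Vol[v](s)\diff s .
\]
Likewise, by~\eqref{eq:DE-Sinc-Nyst-sol} the approximate problem is $(\Ident-\calDE{T})\uDE=\tilde{g}_N$, where $\tilde{g}_N(t)=u_a+\sum_{j=-N}^N g(\tDE_j)\wDE_j(t)$ and
\[
(\calDE{T}v)(t)=\sum_{j=-N}^N\mu(\tDE_j)v(\tDE_j)\wDE_j(t)
+\sum_{j=-N}^N\VolDE[v](\tDE_j)\wDE_j(t).
\]
The finite-rank operator $\calDE{T}$ depends on $v$ only through the node values $v(\tDE_m)$, so the operator equation is equivalent to the linear system~\eqref{eq:linear-system-DE}; in particular, invertibility of $(\Ident-\calDE{T})$ on $C([a,\,b])$ is equivalent to invertibility of $(\InZero-\matDE{W})$. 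The added continuity hypotheses on $\mu$, $k(t,\cdot)Q(\cdot)$, and $k(\cdot,s)Q(s)$ guarantee that $\mathcal{T}$ and $\calDE{T}$ are well-defined bounded operators on $C([a,\,b])$. The decisive structural fact is that $\mathcal{T}$ is a Volterra-type operator (each term integrates from $a$), hence compact with spectral radius zero, so $(\Ident-\mathcal{T})^{-1}$ exists and is bounded.

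For the invertibility claim, I would show that $\calDE{T}\to\mathcal{T}$ pointwise on $C([a,\,b])$ and that $\{\calDE{T}\}_N$ is collectively compact, and then apply the collectively compact operator approximation theory of Anselone: together with the existence of $(\Ident-\mathcal{T})^{-1}$, this yields an $N_0$ such that $(\Ident-\calDE{T})^{-1}$ exists for all $N\geq N_0$ with $\sup_{N\geq N_0}\|(\Ident-\calDE{T})^{-1}\|=:C_0<\infty$, and hence the invertibility of $(\InZero-\matDE{W})$. Both the pointwise convergence and the equicontinuity needed for collective compactness (via Arzel\`a--Ascoli) rest on the uniform bound $\sup_{t\in[a,\,b]}\sum_{j=-N}^N|\wDE_j(t)|=\Order(1)$ for the Lebesgue constant of the DE-Sinc indefinite integration, which follows from $|J(j,h)(\xi)|\leq C h$ together with the double-exponential decay of $\DEtDiv(jh)$.

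The error estimate then follows from this stability. Writing
\[
u-\uDE=(\Ident-\calDE{T})^{-1}\bigl[(\tilde{g}-\tilde{g}_N)+(\mathcal{T}-\calDE{T})u\bigr],
\]
I would bound the two inner terms separately. Here $\|\tilde{g}-\tilde{g}_N\|_{\infty}$ is controlled by Theorem~\ref{Thm:DE-Sinc-Indef} applied to $f=g$, using $gQ\in\LC_{\alpha}(\DEt(\domD_d))$. For $(\mathcal{T}-\calDE{T})u$, split off the $\mu u$ contribution (Theorem~\ref{Thm:DE-Sinc-Indef} with $f=\mu u$, using $\mu u Q\in\LC_{\alpha}(\DEt(\domD_d))$ from Theorem~\ref{thm:rewrite-DE-assump}) and the $\Vol u$ contribution, decomposing the latter as
\[
\int_a^t\Vol[u](s)\diff s-\sum_{j}\VolDE[u](\tDE_j)\wDE_j(t)
=\Bigl[\int_a^t\Vol[u](s)\diff s-\sum_j\Vol[u](\tDE_j)\wDE_j(t)\Bigr]
+\sum_j\bigl(\Vol[u](\tDE_j)-\VolDE[u](\tDE_j)\bigr)\wDE_j(t),
\]
where the first bracket is an outer quadrature error (Theorem~\ref{Thm:DE-Sinc-Indef} with $f=\Vol u$) and the second is the inner quadrature error, uniform over the nodes by $k(\tDE_j,\cdot)u(\cdot)Q(\cdot)\in\LC_{\alpha}(\DEt(\domD_d))$, multiplied by the $\Order(1)$ Lebesgue constant. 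Every piece carries the factor $\tfrac{\log(2dN/\alpha)}{N}\exp[-\pi dN/\log(2dN/\alpha)]$, so with the uniform bound $C_0$ we reach the stated estimate. The whole argument parallels that of Theorem~\ref{thm:SE-converge}, with Theorem~\ref{Thm:DE-Sinc-Indef} replacing Theorem~\ref{Thm:SE-Sinc-Indef} throughout.

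The step I expect to be the main obstacle is establishing collective compactness together with the pointwise convergence $\calDE{T}v\to\mathcal{T}v$ for \emph{every} $v\in C([a,\,b])$: the Sinc error estimates of Theorems~\ref{Thm:SE-Sinc-Indef} and~\ref{Thm:DE-Sinc-Indef} presuppose an \emph{analytic} integrand in $\LC_{\alpha}$, whereas these operators act on merely continuous functions (which need not exhibit the endpoint decay built into $\LC_{\alpha}$). The route around this is precisely the uniform Lebesgue-constant bound $\sum_j|\wDE_j(t)|=\Order(1)$, which upgrades convergence on a suitable dense subclass of analytic functions to convergence on all of $C([a,\,b])$ by a Banach--Steinhaus-type argument, and at the same time furnishes the boundedness and equicontinuity required for Arzel\`a--Ascoli.
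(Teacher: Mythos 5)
Your operator reformulation, the equivalence with the linear system \eqref{eq:linear-system-DE}, and the final error decomposition $u-\uDE=(\Ident-\calDE{W})^{-1}\{(\mathcal{W}u-\calDE{W}u)+(G-\gDE)\}$ with the three-way splitting of $(\mathcal{W}-\calDE{W})u$ are exactly what the paper does, and that part of your argument is sound. The divergence is in the stability step. You route it through Anselone's collectively compact operator approximation theory, which requires two things: pointwise convergence $\calDE{W}v\to\mathcal{W}v$ for \emph{every} $v\in C([a,\,b])$, and collective compactness of the family $\{\calDE{W}\}_N$. You correctly flag the first as delicate and sketch a density/Banach--Steinhaus repair, but you dismiss the second with the remark that the uniform Lebesgue-constant bound ``furnishes the boundedness and equicontinuity required for Arzel\`a--Ascoli.'' That bound gives uniform boundedness of $\bigcup_N\calDE{W}(B_1)$, not equicontinuity \emph{uniformly in $N$}: the derivative of the basis function $\wDE_j(t)=\DEtDiv(jh)J(j,h)(\DEtInv(t))$ carries a factor $\{\DEtInv\}'(t)$, which blows up at the endpoints, while the number of terms grows like $1/h$, so the naive modulus-of-continuity estimate for $\calDE{W}v$ degenerates as $N\to\infty$. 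Establishing collective compactness here is a genuinely nontrivial, and in your proposal unaddressed, step.

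The paper avoids both difficulties. It invokes Atkinson's perturbation theorem (Theorem~\ref{thm:Atkinson-Nystroem}), which needs only compactness of each \emph{individual} $\calDE{W}$ (immediate, since it has finite rank) plus smallness of $\|(\mathcal{W}-\calDE{W})\calDE{W}\|_{\mathcal{L}(\Csp,\Csp)}$ --- and the latter is estimated directly and quantitatively in Lemma~\ref{Lem:DE-Sinc-converge-main}, with the explicit bound $Ch^2$. The device that makes this work is the observation that $\calDE{W}f$ is a linear combination of the functions $J(j,h)(\DEtInv(\cdot))$, which are \emph{analytic} on $\DEt(\domD_d)$ and bounded there by Lemma~\ref{Lem:Bound-J-Complex}; hence applying $\mathcal{W}-\calDE{W}$ to $\calDE{W}f$ produces Sinc indefinite-integration errors for integrands $F_j$ that do lie in $\LC_{\alpha}(\DEt(\domD_d))$, so Theorem~\ref{Thm:DE-Sinc-Indef} applies with constants independent of $f$. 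This is the key idea your proposal is missing: composing with $\calDE{W}$ first restores analyticity, so neither the density argument nor collective compactness is needed. To complete your route you would have to actually prove collective compactness; otherwise you should switch to the direct estimate of $\|(\mathcal{W}-\calDE{W})\calDE{W}\|$ as the paper does.
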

This theorem states the invertibility of the matrix $(\InZero-\matDE{W})$,
and it can be rigorously shown to have a much higher convergence rate
than the SE-Sinc-Nystr\"{o}m method.

\begin{remark}
\label{rem:condition-number}
Theorems~\ref{thm:SE-converge} and~\ref{thm:DE-converge}
ensure that for all sufficiently large $N$,
$\|(I_n- \matSE{W})^{-1}\|_{\infty}$ and
$\|(I_n- \matDE{W})^{-1}\|_{\infty}$ are finite,
but their \emph{uniform}-boundedness has not been shown yet.
The latter point will be discussed on another occasion.
%As a matter of fact, a stronger statement can be made:
%the condition numbers of those matrices are uniformly bounded,
%which means that both methods are numerically stable.
%%Due to the limited space, however, 
%This will be reported elsewhere.
\end{remark}

\section{Proofs}
\label{sec:proofs}

\subsection{On the first point: Assumptions on the solution}
\label{sec:proof-1st}

The idea behind resolving the first point
(discussed in Section~\ref{subsec:two-points})
is to analyze the regularity of solution $u$
using the following theorem.

\begin{theorem}[Okayama et al.~{\cite[Theorem~3.2]{okayama11:_theor}}]
\label{thm:Vol-sol-regularity}
Consider a Volterra integral equation
\begin{equation}
 u(t) - \int_a^t K(t,s)u(s)\diff s = G(t),\quad a\leq t\leq b.
\label{eq:Volterra-int}
\end{equation}
%$(\mathcal{I} - \Vol) u = G$.
Let $G\in\Hinf(\domD)$,
let $K(z,\cdot)Q(\cdot)\in\LC_{\alpha}(\domD)$,
and let $K(\cdot,w)Q(w)\in\Hinf(\domD)$,
for all $z,\,w\in\domD$.
Then, the equation~\eqref{eq:Volterra-int} has a unique solution $u\in\Hinf(\domD)$.
\end{theorem}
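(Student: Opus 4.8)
The plan is to solve~\eqref{eq:Volterra-int} by a Neumann series, exploiting the Volterra (triangular) structure of the integral operator so that its iterates decay factorially and the series converges irrespective of the size of the kernel. Writing~\eqref{eq:Volterra-int} as $(\Ident-\mathscr{K})u=G$ with
\[
 (\mathscr{K}v)(t)=\int_a^t K(t,s)v(s)\diff s,
\]
I would show that $\mathscr{K}$ is a bounded operator on $\Hinf(\domD)$ whose spectral radius is zero; then $\Ident-\mathscr{K}$ is invertible on $\Hinf(\domD)$, and the unique solution is $u=\sum_{n=0}^{\infty}\mathscr{K}^n G$, which lies in $\Hinf(\domD)$ because it is a uniformly convergent series of analytic functions.

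First I would establish a \emph{uniform} weighted bound on the kernel, namely that there is a constant $C$ with $|K(z,w)Q(w)|\leq C\,|Q(w)|^{\alpha}$ for all $z,w\in\domD$. The two hypotheses each supply one half of this: $K(z,\cdot)Q(\cdot)\in\LC_{\alpha}(\domD)$ gives, for every fixed $z$, the decay $|K(z,w)Q(w)|\leq C_z|Q(w)|^{\alpha}$ in the second variable, while $K(\cdot,w)Q(w)\in\Hinf(\domD)$ controls the first variable. Since $K(z,w)Q(w)$ is analytic in each variable separately, it is jointly analytic, and the function $z\mapsto C_z=\sup_w|K(z,w)Q(w)|/|Q(w)|^{\alpha}$ is subharmonic; the $\Hinf$-bound in the first variable must then be used to keep $C_z$ from blowing up as $z$ approaches $\partial\domD$, yielding $\sup_z C_z=C<\infty$. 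With this bound in hand, and because $\alpha>0$ makes $|Q(s)|^{\alpha-1}$ integrable near the endpoints $a$ and $b$, the integral defining $\mathscr{K}v$ converges along any path in the simply connected domain $\domD$, is path-independent, and defines an analytic function of $t$ (by Morera's theorem together with differentiation under the integral sign); moreover $\mathscr{K}$ maps $\Hinf(\domD)$ into itself.

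Next I would prove the factorial decay of the iterates. Introducing the monotone weight $\Phi(t)=\int_{a\to t}|Q(s)|^{\alpha-1}|\diff s|$ along a fixed path to $t$ and taking the inner integrations along its subpaths (permissible by path-independence), an induction based on $\diff\Phi(s)=|Q(s)|^{\alpha-1}|\diff s|$ gives
\[
 |(\mathscr{K}^n v)(t)|\leq C^n\,\frac{\Phi(t)^n}{n!}\,\|v\|_{\Hinf(\domD)}
 \leq C^n\,\frac{\Phi_{\max}^n}{n!}\,\|v\|_{\Hinf(\domD)},
\]
where $\Phi_{\max}=\sup_t\Phi(t)<\infty$ is a Beta-type integral. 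Hence $\|\mathscr{K}^n\|\leq (C\Phi_{\max})^n/n!\to 0$, so the spectral radius of $\mathscr{K}$ is zero and $\sum_n\mathscr{K}^n G$ converges in $\Hinf(\domD)$; this yields existence. Uniqueness follows because any two solutions differ by a fixed point $v=\mathscr{K}v$, whence $\|v\|\leq\|\mathscr{K}^n\|\,\|v\|\to 0$, i.e.\ $v=0$.

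The hard part will be the first step: upgrading the two \emph{separate} one-variable hypotheses to the \emph{joint} uniform estimate $|K(z,w)Q(w)|\leq C|Q(w)|^{\alpha}$. The $\LC_{\alpha}$-constant $C_z$ is only known to be finite for each fixed $z$, and showing it is bounded uniformly in $z$ is exactly where the interplay of the two hypotheses with joint analyticity — and the geometry of $\domD$ near the endpoints, where $Q$ vanishes — must be exploited with care. A secondary technical point is making the Volterra factorial estimate rigorous along complex integration paths carrying the integrable endpoint singularities, i.e.\ verifying that the weighted length $\Phi$ is finite and monotone along a suitably chosen nested family of paths approaching $a$.
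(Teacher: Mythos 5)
First, a point of reference: the paper does not prove Theorem~\ref{thm:Vol-sol-regularity} at all --- it is imported verbatim from the cited work of Okayama et al.\ and used as a black box --- so there is no in-paper proof to compare against. Your overall strategy is nonetheless the standard (and, in essence, the reference's) one: exploit the Volterra triangular structure to get $\|\mathscr{K}^n\|\leq (C\Phi_{\max})^n/n!$ along nested paths in $\domD$, conclude that the spectral radius is zero, and sum the Neumann series in $\Hinf(\domD)$.

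The genuine gap is in what you yourself flag as the hard part: upgrading the two separate one-variable hypotheses to the joint uniform bound $|K(z,w)Q(w)|\leq C|Q(w)|^{\alpha}$. That implication is false as stated, and no subharmonicity argument can rescue it. Concretely, let $\phi\colon\domD\to\mathbb{D}$ be a Riemann map and set $K(z,w)=Q(w)^{\alpha-1}/\bigl(1-\phi(z)\phi(w)\bigr)$ (a branch of $Q^{\alpha-1}$ exists since $\domD$ is simply connected and $Q\neq 0$ there). For each fixed $z$ one has $|K(z,w)Q(w)|\leq (1-|\phi(z)|)^{-1}|Q(w)|^{\alpha}$, so $K(z,\cdot)Q(\cdot)\in\LC_{\alpha}(\domD)$; for each fixed $w$ one has $\sup_{z}|K(z,w)Q(w)|\leq |Q(w)|^{\alpha}(1-|\phi(w)|)^{-1}<\infty$, so $K(\cdot,w)Q(w)\in\Hinf(\domD)$; yet $\sup_{z,w}|K(z,w)Q(w)|/|Q(w)|^{\alpha}=\infty$. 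Your function $z\mapsto C_z$ is indeed a supremum of subharmonic functions, but a finite subharmonic function on an open domain need not be bounded, and the $\Hinf$ control in the first variable only bounds $z\mapsto K(z,w)Q(w)$ for each \emph{fixed} $w$, with a constant $M_w$ that may blow up as $w$ approaches $\partial\domD$ at exactly the rate needed to destroy uniformity. Without the joint bound, $\mathscr{K}$ need not even map $\Hinf(\domD)$ boundedly into itself, so the whole argument collapses at this point. The correct reading --- and the one under which the rest of your sketch goes through --- is that the constant $K$ in~\eqref{Leq:LC-bounded-by-Q} is uniform in the first variable, i.e.\ the joint bound is a hypothesis, not a consequence; this is also how the kernel bound is used downstream in this paper (cf.\ the second inequality of~\eqref{leq:mu-k-bound}, where $\int_a^z|k(z,w)|\,|\divv w|\leq C_4$ is taken uniformly in $z$). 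State it as such rather than trying to derive it. The technical points you defer (nested paths $s=\psi(\xi'+\imnum\eta)$, $\xi'\leq\xi$, for $z=\psi(\xi+\imnum\eta)$; finiteness and monotonicity of $\Phi$ along them) are handled exactly as you indicate and pose no difficulty.
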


Notice that
by changing the order of integration,
the equation~\eqref{eq:real-target} can be rewritten as
a Volterra integral equation
\[
 u(t)
-\int_a^t\left(\mu(s)+\int_s^t k(r,s)\diff r\right)u(s)\diff s
=u_a + \int_a^t g(s)\diff s,\quad a\leq t\leq b.
\]
%If we show $u\in\Hinf(\domD)$ with $\domD=\SEt(\domD_d)$
%(by Theorem~\ref{thm:Vol-sol-regularity}),
%Theorem~\ref{thm:rewrite-SE-assump} is established.
%Similarly,
%If we show $u\in\Hinf(\domD)$ with $\domD=\DEt(\domD_d)$,
%Theorem~\ref{thm:rewrite-DE-assump} is established.
Theorem~\ref{thm:Vol-sol-regularity}
enables us to prove the following theorems.
%what we should prove is the following theorems.

\begin{theorem}
\label{thm:SE-sol-regularity}
Let the assumptions
of Theorem~\ref{thm:rewrite-SE-assump} be fulfilled.
Then, the equation~\eqref{eq:real-target} has
a unique solution $u\in\Hinf(\SEt(\domD_d))$.
\end{theorem}
\begin{theorem}
\label{thm:DE-sol-regularity}
Let the assumptions
of Theorem~\ref{thm:rewrite-DE-assump} be fulfilled.
Then, the equation~\eqref{eq:real-target} has
a unique solution $u\in\Hinf(\DEt(\domD_d))$.
\end{theorem}

If we prove these theorems, then
Theorems~\ref{thm:rewrite-SE-assump}
and~\ref{thm:rewrite-DE-assump} are established
by the next lemma (and $\LC_1(\domD)\subseteq \LC_{\alpha}(\domD)$
if $\alpha\leq 1$).

\begin{lemma}
Let $\mu Q\in\LC_{\alpha}(\domD)$,
let $k(z,\cdot)Q(\cdot)\in\LC_{\alpha}(\domD)$
for all $z\in\domD$,
and let $k(\cdot,w)Q(w)\in\Hinf(\domD)$.
Furthermore, let $u\in\Hinf(\domD)$.
Then, we have
$\mu u Q\in\LC_{\alpha}(\domD)$,
$(\Vol u)Q\in\LC_{1}(\domD)$,
and $k(s,\cdot)u(\cdot)Q(\cdot)\in\LC_{\alpha}(\domD)$
for all $s\in [a,\,b]$.
\end{lemma}
\begin{proof}
From the assumptions, it is clear that
$\mu u Q$ and $k(s,\cdot)u(\cdot)Q(\cdot)$
belong to $\LC_{\alpha}(\domD)$.
In addition, since $\Vol : \Hinf(\domD)\to\Hinf(\domD)$
(see also Okayama et al.~\cite{okayama11:_theor}),
$\Vol u\in\Hinf(\domD)$ holds,
and as a result, we have $(\Vol u) Q\in\LC_1(\domD)$.
\end{proof}

Thus, it remains
to prove Theorems~\ref{thm:SE-sol-regularity}
and~\ref{thm:DE-sol-regularity}.
Here, let us set
$\tilde{k}(t,s)=\int_s^tk(r,s)\diff r$,
$K(t,s)=\mu(s)+\tilde{k}(t,s)$,
and $G(t)=u_a+\int_a^t g(s)\diff s$.
Then, Theorems~\ref{thm:SE-sol-regularity}
and~\ref{thm:DE-sol-regularity} are proved as follows.
\begin{proof}
Let us show
the assumptions of Theorem~\ref{thm:Vol-sol-regularity}.
First,
notice that
$\int_a^t g(s)\diff s=\Vol g$ in the case $k(t,s)\equiv 1$,
and $\Vol g\in \Hinf(\domD)$ holds.
Therefore, we have $G\in\Hinf(\domD)$.
%from the assumptions.
Next, we consider $K(t,s)$.
It is clear that $\mu Q \in\LC_{\alpha}(\domD) \subset \Hinf(\domD)$.
Finally,
since $k(z,\cdot)Q(\cdot)\in\LC_{\alpha}(\domD)$
and $k(\cdot,w)Q(w)\in\Hinf(\domD)$,
we can see that
$\tilde{k}(z,\cdot)Q(\cdot)\in \LC_{\alpha}(\domD)$
and
$\tilde{k}(\cdot,w)Q(w)\in \Hinf(\domD)$ by observing
\[
 \tilde{k}(z,w)Q(w) = \int_w^z k(r,w)Q(w)\diff r.
\]
This completes the proof.
\end{proof}

\begin{remark}
Theorems~\ref{thm:Vol-sol-regularity} through~\ref{thm:DE-sol-regularity}
present the regularity of solution $u$ assuming
that the given functions belong to a class of analytic functions.
If another class of functions is considered, the result is expected to be
different. See, for example, Kolk et al.~\cite{2009Kolk}
and Pedas--Vainikko~\cite{2009Pedas}.
\end{remark}

%Similarly,
%If we show $u\in\Hinf(\domD)$ with $\domD=\DEt(\domD_d)$,
%Theorem~\ref{thm:rewrite-DE-assump} is established.

%satisfy the assumptions of
%Theorem~\ref{thm:Vol-sol-regularity},

%Therefore,
%we can apply Theorem~\ref{thm:Vol-sol-regularity}
%by putting $K()$

\subsection{On the second point: Solvability and convergence}
\label{sec:proof-2nd}

\subsubsection{Solvability of the SE-Sinc-Nystr\"{o}m method}
\label{subsubsec:Solvability-SE}
%
%Let us write $\Csp=C([a,\,b])$ for short.
First, consider the SE-Sinc-Nystr\"{o}m method.
Let us write $\Csp=C([a,\,b])$ for short,
and let us define operators
$\mathcal{W}:\Csp\to\Csp$ and $\calSE{W}:\Csp\to\Csp$ as
\begin{align*}
 \mathcal{W} [f](t) &= \int_a^t\left\{\mu(s)f(s)+\Vol[f](s)\right\}\diff s,\\
\calSE{W}[f](t)&=\sum_{j=-N}^N
\left\{\mu(\tSE_j)f(\tSE_j)+\VolSE[f](\tSE_j)\right\}\wSE_j(t).
\end{align*}
Furthermore, let us define a function $\gSE$ (approximation of $G$) as
\[
 \gSE(t) = u_a + \sum_{j=-N}^N g(\tSE_j)\wSE_j(t).
\]
Then, Eqs.~\eqref{eq:real-target}
and~\eqref{eq:SE-Sinc-Nyst-sol}
%and~\eqref{eq:linear-system-SE}
are written as
\begin{align}
(\Ident - \mathcal{W})u&=G,\label{eq:real-target-symbol}\\
(\Ident - \calSE{W})\uSE&=\gSE.\label{eq:SE-Sinc-symbol}
%(\InZero - \matSE{W})\vecSE{u}&=\vecSE{g}.\label{eq:SE-Sinc-linear}
\end{align}
The invertibility of
$(\InZero - \matSE{W})$ is shown as follows.
The first step is to show that
the equation~\eqref{eq:linear-system-SE}
is uniquely solvable if and only if
the equation~\eqref{eq:SE-Sinc-symbol} is uniquely solvable.
This step is omitted here because
one can easily show it following
Okayama et al.~\cite[Lemma~6.1]{okayama11:_theor}.
The second step is to show
that the equation~\eqref{eq:SE-Sinc-symbol} is uniquely solvable
for all sufficiently large $N$.
This can be shown by
%(discussed in Section~\ref{subsec:two-points})
applying the following theorem.

\begin{theorem}[Atkinson~{\cite[Theorem~4.1.1]{atkinson97:_numer}}]
\label{thm:Atkinson-Nystroem}
Assume the following four conditions:\\
%\begin{enumerate}
 1. Operators $\mathcal{X}$ and $\mathcal{X}_n$
are bounded operators on $\Csp$ to $\Csp$.\\
 2. Operator $(\Ident - \mathcal{X}):\Csp\to\Csp$
has a bounded inverse
$(\Ident - \mathcal{X})^{-1}:\Csp\to\Csp$.\\
 3. Operator $\mathcal{X}_n$ is compact on $\Csp$.\\
 4. The following inequality holds:
\begin{equation*}
\|(\mathcal{X}-\mathcal{X}_n)\mathcal{X}_n\|_{\mathcal{L}(\Csp,\Csp)}
<\frac{1}{\|(\Ident - \mathcal{X})^{-1}\|_{\mathcal{L}(\Csp,\Csp)}}.
\end{equation*}
%\end{enumerate}
Then, $(\Ident - \mathcal{X}_n)^{-1}$ exists as a bounded operator
on $\Csp$ to $\Csp$, with
\begin{equation}
\|(\Ident - \mathcal{X}_n)^{-1}\|_{\mathcal{L}(\Csp,\Csp)}
\leq \frac{1 + \|(\Ident - \mathcal{X})^{-1}\|_{\mathcal{L}(\Csp,\Csp)}
\|\mathcal{X}_n\|_{\mathcal{L}(\Csp,\Csp)}}
{1 - \|(\Ident - \mathcal{X})^{-1}\|_{\mathcal{L}(\Csp,\Csp)}\|(\mathcal{X}-\mathcal{X}_n)\mathcal{X}_n\|_{\mathcal{L}(\Csp,\Csp)}}.
\label{InEq:Bound-Inverse-Op}
\end{equation}
%Furthermore, if $(\Ident - \mathcal{X})u=g$
%and $(\Ident - \mathcal{X}_n)v=g$,
% then
%\begin{equation}
%\|u - v\|_{\Csp}\leq \|(\Ident - \mathcal{X}_n)^{-1}\|_{\mathcal{L}(\Csp,\Csp)}
%\|(\mathcal{X} - \mathcal{X}_n)u\|_{\Csp}.
%\label{InEq:Atkinson-Nystroem-Error}
%\end{equation}
\end{theorem}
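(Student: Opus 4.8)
The plan is to treat $(\Ident - \mathcal{X}_n)$ as a perturbation of the invertible operator $(\Ident - \mathcal{X})$, but \emph{without} pretending that $\mathcal{X} - \mathcal{X}_n$ is small in operator norm (which is exactly what fails for Nystr\"{o}m-type approximations). Write $\mathcal{A} = (\Ident - \mathcal{X})^{-1}$, which is bounded by condition~2. First I would introduce two perturbation operators $\mathcal{P}_n = \mathcal{A}(\mathcal{X} - \mathcal{X}_n)$ and $\mathcal{M}_n = \mathcal{P}_n\mathcal{X}_n = \mathcal{A}(\mathcal{X} - \mathcal{X}_n)\mathcal{X}_n$. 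Condition~4 yields $\|\mathcal{M}_n\|_{\mathcal{L}(\Csp,\Csp)} \leq \|\mathcal{A}\|_{\mathcal{L}(\Csp,\Csp)}\|(\mathcal{X} - \mathcal{X}_n)\mathcal{X}_n\|_{\mathcal{L}(\Csp,\Csp)} < 1$, so $(\Ident + \mathcal{M}_n)$ is boundedly invertible by the Neumann series, with $\|(\Ident + \mathcal{M}_n)^{-1}\|_{\mathcal{L}(\Csp,\Csp)} \leq 1/(1 - \|\mathcal{M}_n\|_{\mathcal{L}(\Csp,\Csp)})$. The point is that $\mathcal{P}_n$ itself need not be small, whereas $\mathcal{M}_n$ is; exploiting this distinction is the crux of the whole argument.

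Next I would reduce the equation $(\Ident - \mathcal{X}_n)x = y$ to one governed by $(\Ident + \mathcal{M}_n)$. Applying $\mathcal{A}$ and using $\mathcal{A}(\Ident - \mathcal{X}_n) = \mathcal{A}(\Ident - \mathcal{X}) + \mathcal{A}(\mathcal{X} - \mathcal{X}_n) = \Ident + \mathcal{P}_n$ turns the equation into $(\Ident + \mathcal{P}_n)x = \mathcal{A}y$. The decisive step is then to eliminate the uncontrolled operator $\mathcal{P}_n$: since the original equation gives $x = y + \mathcal{X}_n x$, I substitute this inside $\mathcal{P}_n x$ to get $\mathcal{P}_n x = \mathcal{P}_n y + \mathcal{P}_n\mathcal{X}_n x = \mathcal{P}_n y + \mathcal{M}_n x$. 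This converts the equation into $(\Ident + \mathcal{M}_n)x = \mathcal{A}y - \mathcal{P}_n y$, and a one-line simplification gives $\mathcal{A} - \mathcal{P}_n = \mathcal{A}(\Ident - \mathcal{X} + \mathcal{X}_n) = \Ident + \mathcal{A}\mathcal{X}_n$, so that $(\Ident + \mathcal{M}_n)x = (\Ident + \mathcal{A}\mathcal{X}_n)y$. Because $(\Ident + \mathcal{M}_n)$ is invertible, $x$ is uniquely determined; in particular $y = 0$ forces $x = 0$, so $(\Ident - \mathcal{X}_n)$ is injective.

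For surjectivity, and hence existence of the inverse, I would invoke Riesz--Schauder theory: by condition~3 the operator $\mathcal{X}_n$ is compact, so $(\Ident - \mathcal{X}_n)$ is Fredholm of index zero, and the injectivity just established upgrades to bijectivity. Thus $(\Ident - \mathcal{X}_n)^{-1}$ exists as a bounded operator, and since every solution satisfies the identity derived above, one obtains the explicit representation $(\Ident - \mathcal{X}_n)^{-1} = (\Ident + \mathcal{M}_n)^{-1}(\Ident + \mathcal{A}\mathcal{X}_n)$. Taking norms, using $\|\Ident + \mathcal{A}\mathcal{X}_n\|_{\mathcal{L}(\Csp,\Csp)} \leq 1 + \|\mathcal{A}\|_{\mathcal{L}(\Csp,\Csp)}\|\mathcal{X}_n\|_{\mathcal{L}(\Csp,\Csp)}$ together with the Neumann bound on $(\Ident + \mathcal{M}_n)^{-1}$ and the estimate $\|\mathcal{M}_n\|_{\mathcal{L}(\Csp,\Csp)} \leq \|\mathcal{A}\|_{\mathcal{L}(\Csp,\Csp)}\|(\mathcal{X} - \mathcal{X}_n)\mathcal{X}_n\|_{\mathcal{L}(\Csp,\Csp)}$, then reproduces the stated bound~\eqref{InEq:Bound-Inverse-Op} verbatim.

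I expect the main obstacle to be precisely that $\mathcal{X} - \mathcal{X}_n$ does not tend to zero in operator norm, so that neither a direct Neumann series for $(\Ident - \mathcal{X}_n)$ nor one for $(\Ident + \mathcal{P}_n)$ is available. The idea that overcomes this is the substitution $x = y + \mathcal{X}_n x$ inside $\mathcal{P}_n x$, which trades the uncontrolled factor $\mathcal{P}_n$ for the genuinely small factor $\mathcal{M}_n = \mathcal{P}_n\mathcal{X}_n$; once this replacement is in hand, everything reduces to routine Neumann-series and Fredholm arguments.
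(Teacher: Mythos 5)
Your argument is correct and is essentially the standard proof of this result from Atkinson's book, which the paper simply cites without reproducing: the key identity $(\Ident + \mathcal{A}\mathcal{X}_n)(\Ident - \mathcal{X}_n) = \Ident + \mathcal{A}(\mathcal{X}-\mathcal{X}_n)\mathcal{X}_n$ (which you derive via the substitution $x = y + \mathcal{X}_n x$), the Neumann-series inversion of $\Ident + \mathcal{M}_n$ under condition~4, and the Fredholm alternative from the compactness of $\mathcal{X}_n$ to upgrade the left inverse to a two-sided one, yielding exactly the bound~\eqref{InEq:Bound-Inverse-Op}. No gaps.
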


In what follows, we show that the four conditions
of Theorem~\ref{thm:Atkinson-Nystroem}
are fulfilled
with $\mathcal{X}=\mathcal{W}$ and $\mathcal{X}_n = \calSE{W}$,
under the assumptions of Theorem~\ref{thm:SE-converge}.
Condition~1 clearly holds.
Condition~2 is a classical result.
%(in fact, we can prove it in the same way as
%in Section~\ref{sec:proof-1st},
%by using the contraction mapping theorem).
Condition~3 immediately follows from the Arzel\`{a}--Ascoli theorem.
The most difficult task is showing condition~4.
%The main difficulty lies in 
For this purpose,
we need a bound on the basis function
$J(j,h)(x)$, as follows.
\begin{lemma}[Stenger~{\cite[Lemma~3.6.5]{stenger93:_numer}}]
\label{Lem:Bound-J-Real}
For all $x\in\mathbb{R}$, it holds that
\begin{equation*}
%\left|
%\int_{-\infty}^x S(j,h)(t)\diff t
%\right|
|J(j,h)(x)|
\leq 1.1 h.
\end{equation*}
\end{lemma}
%
%This result can be extended to the complex plane as follows.
%
\begin{lemma}[Okayama et al.~{\cite[Lemma~6.4]{okayama11:_theor}}]
\label{Lem:Bound-J-Complex}
For all $x\in\mathbb{R}$ and $y\in\mathbb{R}$, it holds that
\begin{equation*}
%\left|
%\int_{-\infty}^{x+\imnum y} S(j,h)(\zeta)\diff \zeta
%\right|
|J(j,h)(x+\imnum y)|
\leq \frac{5h}{\pi}\cdot\frac{\sinh(\pi y/h)}{\pi y/h}.
\end{equation*}
\end{lemma}

Using this lemma, we can prove
the convergence of the term
$\|(\mathcal{W} - \calSE{W})\calSE{W}\|_{\mathcal{L}(\Csp,\Csp)}$
as described below.

\begin{lemma}
\label{Lem:SE-Sinc-converge-main}
Let $\mu$ and $k$ satisfy the assumptions
in Theorem~\ref{thm:SE-converge}.
Then, there exists a constant $C$ independent of $N$
such that
\begin{equation*}
\|(\mathcal{W} - \calSE{W})\calSE{W}\|_{\mathcal{L}(\Csp,\Csp)}
\leq C h,
\end{equation*}
where $h$ is the mesh size defined by~\eqref{Def:h-SE}.
\end{lemma}
\begin{proof}
We show that there exists a constant $C$
independent of $f$ and $N$ such that
\[
 |(\mathcal{W}-\calSE{W})\calSE{W}[f](t)|
\leq C \|f\|_{\Csp} h.
\]
Let us define functions $F_j(s)$ and $E_j(s)$ as
\begin{align*}
 F_j(s)&=\mu(s) J(j,h)(\SEtInv(s))
+\int_a^s k(s,r)J(j,h)(\SEtInv(r))\diff r,\\
 E_j(t)&=\int_a^t F_j(s)\diff s -\sum_{k=-N}^N F_j(\tSE_k)\wSE_k(t).
\end{align*}
Then, we have
\begin{align}
%\label{eq:operator-error-SE}
(\mathcal{W}-\calSE{W})\calSE{W}[f](t)
&=\sum_{j=-N}^N \mu(\tSE_j)f(\tSE_j)\SEtDiv(jh) E_j(t)\nonumber\\
&\quad
+\sum_{j=-N}^N\SEtDiv(jh)E_j(t)
\sum_{k=-N}^N k(\tSE_j,\tSE_k) f(\tSE_k)\wSE_k(\SEt(jh)).
\label{eq:operator-error-SE}
\end{align}
Since $h\sum_j |\mu(\tSE_j)|\SEtDiv(jh)$
converges to $\int_a^b |\mu(s)|\diff s$ as $N\to\infty$,
the first term of~\eqref{eq:operator-error-SE} is bounded as
\begin{align*}
 \left|
\sum_{j=-N}^N \mu(\tSE_j)f(\tSE_j)\SEtDiv(jh) E_j(t)
\right|
\leq
\|f\|_{\Csp}
\max_j|E_j(t)|
\left\{
\sum_{j=-N}^N |\mu(\tSE_j)|\SEtDiv(jh)
\right\}
\leq \|f\|_{\Csp} \max_j|E_j(t)| \frac{C_1}{h}
\end{align*}
for some constant $C_1$ independent of $f$ and $N$.
Similarly, from the convergence
\[
 h^2 \sum_{j=-N}^N\sum_{k=-N}^N |k(\tSE_j,\tSE_k)|\SEtDiv(jh)\SEtDiv(kh)
\to \int_a^b \left(\int_a^b |k(t,s)|\diff s\right)\diff t
\]
as $N\to\infty$,
the second term of~\eqref{eq:operator-error-SE} is bounded as
\begin{align*}
\left|
\sum_{j=-N}^N\SEtDiv(jh)E_j(t)
\sum_{k=-N}^N k(\tSE_j,\tSE_k) f(\tSE_k)\wSE_k(\SEt(jh))
\right|
\leq \|f\|_{\Csp}\max_j|E_j(t)|\frac{C_2}{h}
\end{align*}
for some constant $C_2$ independent of $f$ and $N$
(note that $|\wSE_k(\SEt(jh))|\leq 1.1h\cdot\SEtDiv(kh)$ holds
by Lemma~\ref{Lem:Bound-J-Real}).
What is left is to bound $|E_j(t)|$.
By the assumptions on $\mu$ and $k$,
there exist constants $C_3$ and $C_4$
independent of $f$ and $N$ such that
\begin{equation}
|\mu(z)Q(z)|
\leq C_3|Q(z)|^{\alpha},
\quad
\int_a^z|k(z,w)||\divv w|
\leq C_4.
\label{leq:mu-k-bound}
\end{equation}
From this and Lemma~\ref{Lem:Bound-J-Complex}, it holds that
\begin{align*}
 |F_j(z)Q(z)|
\leq \left(C_3 |Q(z)|^{\alpha}
+ C_4 |Q(z)|^{\alpha}|Q(z)|^{1-\alpha}\right)\max_j|J(j,h)(\SEtInv(z))|
\leq C_5 |Q(z)|^{\alpha} \frac{5h}{\pi}\frac{\sinh(\pi d/h)}{\pi d/h}
\end{align*}
for some constant $C_5$ independent of $f$ and $N$.
Therefore, $F_j$ satisfies
the assumptions of Theorem~\ref{Thm:SE-Sinc-Indef},
from which
%From this and $\rme^{-\sqrt{\pi d \alpha N}}=\rme^{-\pi d /h}$,
we have
\begin{align*}
|E_j(t)|\leq
C_5 \frac{5h}{\pi}\frac{\sinh(\pi d/h)}{\pi d/h}
(b-a)^{2\alpha-1}
C_{\alpha,d}^{\textSE}
 \rme^{-\sqrt{\pi d \alpha N}}
=\frac{5C_5C_{\alpha,d}^{\textSE}}{\pi^2 d}
\left[\sinh(\pi d/h)\rme^{-\pi d/h}\right] h^2
\leq \frac{5C_5C_{\alpha,d}^{\textSE}}{2\pi^2 d} h^2.
\end{align*}
Summing up the above results, we finally have
\begin{align*}
|(\mathcal{W}-\calSE{W})\calSE{W}[f](t)|
\leq \|f\|_{\Csp}
\left\{\frac{C_1}{h} + \frac{C_2}{h}\right\}\max_{j}|E_j(t)|
\leq\|f\|_{\Csp}
\left\{C_1 + C_2\right\}
\frac{5C_5C_{\alpha,d}^{\textSE}}{2\pi^2 d} h,
\end{align*}
which is the desired inequality.
\end{proof}

Thus, condition~4 in
Theorem~\ref{thm:Atkinson-Nystroem} is fulfilled
for all sufficiently large $N$.
As a result,
$(\Ident -\calSE{W})$ has a bounded inverse,
and so the equation~\eqref{eq:SE-Sinc-symbol}
is uniquely solvable.
This shows the existence of $(\InZero - \matSE{W})^{-1}$
as was previously explained.
In summary, the next lemma holds.
\begin{lemma}
\label{lem:SE-inverse-exist}
Let the assumptions of Theorem~\ref{thm:SE-converge}
be fulfilled. Then,
there exists a positive integer $N_0$
such that for all $N\geq N_0$,
$(\InZero-\matSE{W})^{-1}:\mathbb{R}^{n}\to\mathbb{R}^n$
and $(\Ident - \calSE{W})^{-1}:\Csp\to\Csp$ exist,
and it holds that
\[
 \|u - \uSE\|_{\Csp}
\leq \|(\Ident - \calSE{W})^{-1}\|_{\mathcal{L}(\Csp,\Csp)}
 \left\{\|\mathcal{W}u -\calSE{W}u\|_{\Csp} + \|G - \gSE\|_{\Csp}\right\}.
\]
Furthermore,
there exists a constant $C$ independent of $N$ such that
\[
 \|(\Ident - \calSE{W})^{-1}\|_{\mathcal{L}(\Csp,\Csp)}\leq C.
\]
\end{lemma}
\begin{proof}
Using~\eqref{eq:real-target-symbol},~\eqref{eq:SE-Sinc-symbol},
and the existence of $(\Ident - \calSE{W})^{-1}$, we have
\begin{align}
%\label{eq:SE-Nyst-error-rewrite}
u - \uSE
&=u -
(\Ident - \calSE{W})^{-1} G
+(\Ident - \calSE{W})^{-1} G
- (\Ident - \calSE{W})^{-1} \gSE\nonumber\\
&=(\Ident - \calSE{W})^{-1}
\left\{(\Ident - \calSE{W})u - G\right\}
+(\Ident - \calSE{W})^{-1}(G - \gSE)\nonumber\\
&=(\Ident - \calSE{W})^{-1}
\left\{(\mathcal{W}u -\calSE{W}u) + (G - \gSE)\right\}.\nonumber
\end{align}
The proof is completed by showing the
boundedness of
$\|(\Ident - \calSE{W})^{-1}\|_{\mathcal{L}(\Csp,\Csp)}$.
From inequality~\eqref{InEq:Bound-Inverse-Op},
it holds that
\[
 \|(\Ident - \calSE{W})^{-1}\|_{\mathcal{L}(\Csp,\Csp)}
\leq \frac{1 + \|(\Ident - \mathcal{W})^{-1}\|_{\mathcal{L}(\Csp,\Csp)}
\|\calSE{W}\|_{\mathcal{L}(\Csp,\Csp)}}
{1 - \|(\Ident - \mathcal{W})^{-1}\|_{\mathcal{L}(\Csp,\Csp)}\|(\mathcal{W}-\calSE{W})\calSE{W}\|_{\mathcal{L}(\Csp,\Csp)}}.
\]
Since
$\|(\Ident - \mathcal{W})^{-1}\|_{\mathcal{L}(\Csp,\Csp)}$
is a constant
and $\|(\mathcal{W}-\calSE{W})\calSE{W}\|_{\mathcal{L}(\Csp,\Csp)}\to 0$
as $N\to\infty$,
it remains to show the
boundedness of $\|\calSE{W}\|_{\mathcal{L}(\Csp,\Csp)}$.
First,
\begin{align*}
 \calSE{W}[f](t)
=\sum_{j=-N}^N \mu(\tSE_j)f(\tSE_j)\wSE_j(t)
+\sum_{j=-N}^N \wSE_j(t)
\sum_{k=-N}^N k(\tSE_j,\tSE_k) f(\tSE_k)\wSE_k(\SEt(jh)),
\end{align*}
which is quite similar to~\eqref{eq:operator-error-SE}.
The estimate proceeds in a similar manner, and as a result,
it holds that
\[
|\calSE{W}[f](t)|
\leq \|f\|_{\Csp} \left\{\frac{C_1}{h}+\frac{C_2}{h}\right\}(1.1 h)
=\|f\|_{\Csp} 1.1\left\{C_1 + C_2\right\}
\]
for the same constants $C_1$ and $C_2$ as before.
This completes the proof.
\end{proof}

\subsubsection{Convergence of the SE-Sinc-Nystr\"{o}m method}
\label{subsubsec:Convergence-SE}

Thanks to Lemma~\ref{lem:SE-inverse-exist},
Theorem~\ref{thm:SE-converge} is established
if the next lemma is proved.

\begin{lemma}
Let the assumptions of Theorem~\ref{thm:SE-converge}
be fulfilled,
and let $N_0$ be
the positive integer appearing in Lemma~\ref{lem:SE-inverse-exist}.
Then, there exist constants $\tilde{C}_1$ and $\tilde{C}_2$
independent of $N$ such that for all $N\geq N_0$,
\begin{align}
\|\mathcal{W} u - \calSE{W} u\|_{\Csp}
&\leq \tilde{C}_1 \rme^{-\sqrt{\pi d \alpha N}},\label{leq:calSE-Wu}\\
\|G - \gSE\|_{\Csp}
&\leq \tilde{C}_2 \rme^{-\sqrt{\pi d \alpha N}}.\label{leq:G-SE-indef}
\end{align}
\end{lemma}
\begin{proof}
Since $gQ\in\LC_{\alpha}(\SEt(\domD_d))$,
\eqref{leq:G-SE-indef} clearly holds
from Theorem~\ref{Thm:SE-Sinc-Indef}.
For~\eqref{leq:calSE-Wu},
it holds that
\begin{align*}
&\mathcal{W}[u](t) - \calSE{W}[u](t)\\
&=\left[\int_a^t \mu(s)u(s)\diff s
-\sum_{j=-N}^N \mu(\tSE_j)u(\tSE_j)\wSE_j(t)
\right]\\
&\quad +
\left[\int_a^t\Vol[u](s)\diff s
-\sum_{j=-N}^N \Vol[u](\tSE_j)\wSE_j(t)
\right]%\\
%&\quad
+\left[
\sum_{j=-N}^N \left\{\Vol[u](\tSE_j) - \VolSE[u](\tSE_j) \right\}\wSE_j(t)
\right].
\end{align*}
Since Theorem~\ref{thm:rewrite-SE-assump}
claims
$\mu u Q$ and $(\Vol u)Q$
belong to $\LC_{\alpha}(\SEt(\domD_d))$,
the first and second terms are bounded
using Theorem~\ref{Thm:SE-Sinc-Indef}
as in~\eqref{leq:G-SE-indef}.
For the third term, since
$k(s,\cdot)u(\cdot)Q(\cdot)\in\LC_{\alpha}(\SEt(\domD_d))$
from Theorem~\ref{thm:rewrite-SE-assump},
$|\Vol[u](\tSE_j) - \VolSE[u](\tSE_j)|$
is bounded using Theorem~\ref{Thm:SE-Sinc-Indef}.
Using this bound, we have
\begin{align*}
\left|
\sum_{j=-N}^N \left\{\Vol[u](\tSE_j) - \VolSE[u](\tSE_j) \right\}
\cdot \wSE_j(t)
\right|
&\leq \sum_{j=-N}^N
\left(
\tilde{C}_5 \rme^{-\sqrt{\pi d \alpha N}}
\right)
\cdot |\wSE_j(t)|
\end{align*}
for some constant $\tilde{C}_5$.
In addition,
since
$h\sum_{j=-N}^N\SEtDiv(jh)$ converges to $(b-a)$
as $N\to\infty$,
and from Lemma~\ref{Lem:Bound-J-Real},
there exists a constant $\tilde{C}_6$ such that
\[
\sum_{j=-N}^N|\wSE_j(t)|
\leq \sum_{j=-N}^N (1.1 h \SEtDiv(jh))
\leq \tilde{C}_6.
\]
This completes the proof.
\end{proof}

\subsubsection{Solvability of the DE-Sinc-Nystr\"{o}m method}

We proceed now to the case of the DE-Sinc-Nystr\"{o}m method.
Let us introduce an operator $\calDE{W}:\Csp\to\Csp$
and a function $\gDE$ as
\begin{align*}
\calDE{W}[f](t)&=\sum_{j=-N}^N
\left\{\mu(\tDE_j)f(\tDE_j)+\VolDE[f](\tDE_j)\right\}\wDE_j(t),\\
 \gDE(t) &= u_a + \sum_{j=-N}^N g(\tDE_j)\wDE_j(t).
\end{align*}
The proof proceeds in the same manner as in the SE case
(Section~\ref{subsubsec:Solvability-SE}).
First, the four conditions in
Theorem~\ref{thm:Atkinson-Nystroem} are confirmed
with $\mathcal{X}=\mathcal{W}$ and $\mathcal{X}_n=\calDE{W}$.
Conditions~1 through~3 are shown in the same way.
Condition~4 is shown as follows.
\begin{lemma}
\label{Lem:DE-Sinc-converge-main}
Let $\mu$ and $k$ satisfy the assumptions
in Theorem~\ref{thm:DE-converge}.
Then, there exists a constant $C$ independent of $N$
such that
\begin{equation*}
\|(\mathcal{W} - \calDE{W})\calDE{W}\|_{\mathcal{L}(\Csp,\Csp)}
\leq C h^2,
\end{equation*}
where $h$ is the mesh size defined by~\eqref{Def:h-DE}.
\end{lemma}
\begin{proof}
We show that there exists a constant $C$
independent of $f$ and $N$ such that
\[
 |(\mathcal{W}-\calDE{W})\calDE{W}[f](t)|
\leq C \|f\|_{\Csp} h^2.
\]
Let us define functions $F_j(s)$ and $E_j(s)$ as
\begin{align*}
 F_j(s)&=\mu(s) J(j,h)(\DEtInv(s))
+\int_a^s k(s,r)J(j,h)(\DEtInv(r))\diff r,\\
 E_j(t)&=\int_a^t F_j(s)\diff s -\sum_{k=-N}^N F_j(\tDE_k)\wDE_k(t).
\end{align*}
Then, we have
\begin{align}
%\label{eq:operator-error-DE}
(\mathcal{W}-\calDE{W})\calDE{W}[f](t)
&=\sum_{j=-N}^N \mu(\tDE_j)f(\tDE_j)\DEtDiv(jh) E_j(t)\nonumber\\
&\quad +
\sum_{j=-N}^N\DEtDiv(jh)E_j(t)
\sum_{k=-N}^N k(\tDE_j,\tDE_k) f(\tDE_k)\wDE_k(\DEt(jh)).
\label{eq:operator-error-DE}
\end{align}
Since $h\sum_j |\mu(\tDE_j)|\DEtDiv(jh)$
converges to $\int_a^b |\mu(s)|\diff s$ as $N\to\infty$,
the first term of~\eqref{eq:operator-error-DE} is bounded as
\begin{align*}
 \left|
\sum_{j=-N}^N \mu(\tDE_j)f(\tDE_j)\DEtDiv(jh) E_j(t)
\right|
\leq
\|f\|_{\Csp}
\max_j|E_j(t)|
\left\{
\sum_{j=-N}^N |\mu(\tDE_j)|\DEtDiv(jh)
\right\}
\leq \|f\|_{\Csp} \max_j|E_j(t)| \frac{C_1}{h}
\end{align*}
for some constant $C_1$ independent of $f$ and $N$.
Similarly, from the convergence
\[
 h^2 \sum_{j=-N}^N\sum_{k=-N}^N |k(\tDE_j,\tDE_k)|\DEtDiv(jh)\DEtDiv(kh)
\to \int_a^b \left(\int_a^b |k(t,s)|\diff s\right)\diff t
\]
as $N\to\infty$,
the second term of~\eqref{eq:operator-error-DE} is bounded as
\begin{align*}
\left|
\sum_{j=-N}^N\DEtDiv(jh)E_j(t)
\sum_{k=-N}^N k(\tDE_j,\tDE_k) f(\tDE_k)\wDE_k(\DEt(jh))
\right|
\leq \|f\|_{\Csp}\max_j|E_j(t)|\frac{C_2}{h}
\end{align*}
for some constant $C_2$ independent of $f$ and $N$
(note that $|\wDE_k(\DEt(jh))|\leq 1.1h\cdot\DEtDiv(kh)$ holds
by Lemma~\ref{Lem:Bound-J-Real}).
What is left is to bound $|E_j(t)|$.
By the assumptions on $\mu$ and $k$,
there exist constants $C_3$ and $C_4$
independent of $f$ and $N$ such that~\eqref{leq:mu-k-bound} holds.
From this and Lemma~\ref{Lem:Bound-J-Complex}, it holds that
\begin{align*}
 |F_j(z)Q(z)|
\leq \left(C_3 |Q(z)|^{\alpha}
+ C_4 |Q(z)|^{\alpha}|Q(z)|^{1-\alpha}\right)\max_j|J(j,h)(\DEtInv(z))|
\leq C_5 |Q(z)|^{\alpha} \frac{5h}{\pi}\frac{\sinh(\pi d/h)}{\pi d/h},
\end{align*}
for some constant $C_5$ independent of $f$ and $N$.
Therefore, $F_j$ satisfies
the assumptions of Theorem~\ref{Thm:DE-Sinc-Indef},
from which
we have
\begin{align*}
|E_j(t)|&\leq
C_5 \frac{5h}{\pi}\frac{\sinh(\pi d/h)}{\pi d/h}
(b-a)^{2\alpha-1}
C_{\alpha,d}^{\textDE}
 \frac{\log(2 d N/\alpha)}{N}\rme^{-\pi d N/\log(2 d N/\alpha)}\\
&=\frac{5C_5C_{\alpha,d}^{\textDE}}{\pi^2 d}
\left[\sinh(\pi d/h)\rme^{-\pi d/h}\right] h^3
\leq \frac{5C_5C_{\alpha,d}^{\textDE}}{2\pi^2 d} h^3.
\end{align*}
Summing up the above results, we finally have
\begin{align*}
|(\mathcal{W}-\calDE{W})\calDE{W}[f](t)|
\leq \|f\|_{\Csp}
\left\{\frac{C_1}{h} + \frac{C_2}{h}\right\}\max_{j}|E_j(t)|
\leq\|f\|_{\Csp}
\left\{C_1 + C_2\right\}
\frac{5C_5C_{\alpha,d}^{\textDE}}{2\pi^2 d} h^2,
\end{align*}
which is the desired inequality.
\end{proof}

Thus, condition~4 in
Theorem~\ref{thm:Atkinson-Nystroem} is fulfilled
for all sufficiently large $N$.
As a summary of this part,
the next lemma holds. The proof is omitted because
it proceeds in the same way as the proof of Lemma~\ref{lem:SE-inverse-exist}.
\begin{lemma}
\label{lem:DE-inverse-exist}
Let the assumptions of Theorem~\ref{thm:DE-converge}
be fulfilled. Then,
there exists a positive integer $N_0$
such that for all $N\geq N_0$,
$(\InZero-\matDE{W})^{-1}:\mathbb{R}^{n}\to\mathbb{R}^n$
and $(\Ident - \calDE{W})^{-1}:\Csp\to\Csp$ exist,
and it holds that
\[
 \|u - \uDE\|_{\Csp}
\leq \|(\Ident - \calDE{W})^{-1}\|_{\mathcal{L}(\Csp,\Csp)}
 \left\{\|\mathcal{W}u -\calDE{W}u\|_{\Csp} + \|G - \gDE\|_{\Csp}\right\}.
\]
Furthermore,
there exists a constant $C$ independent of $N$ such that
\[
 \|(\Ident - \calDE{W})^{-1}\|_{\mathcal{L}(\Csp,\Csp)}\leq C.
\]
\end{lemma}
%\begin{proof}
%In the same way as~\eqref{eq:SE-Nyst-error-rewrite},
%we have
%\begin{align*}
%u - \uDE
%&=(\Ident - \calDE{W})^{-1}
%\left\{(\mathcal{W}u -\calDE{W}u) + (G - \gDE)\right\}.
%\end{align*}
%The proof is completed by showing
%the boundedness of
%$\|(\Ident - \calDE{W})^{-1}\|_{\mathcal{L}(\Csp,\Csp)}$.
%From inequality~\eqref{InEq:Bound-Inverse-Op},
%it holds that
%\[
% \|(\Ident - \calDE{W})^{-1}\|_{\mathcal{L}(\Csp,\Csp)}
%\leq \frac{1 + \|(\Ident - \mathcal{W})^{-1}\|_{\mathcal{L}(\Csp,\Csp)}
%\|\calDE{W}\|_{\mathcal{L}(\Csp,\Csp)}}
%{1 - \|(\Ident - \mathcal{W})^{-1}\|_{\mathcal{L}(\Csp,\Csp)}\|(\mathcal{W}-\calDE{W})\calDE{W}\|_{\mathcal{L}(\Csp,\Csp)}}.
%\]
%Since
%$\|(\Ident - \mathcal{W})^{-1}\|_{\mathcal{L}(\Csp,\Csp)}$
%is a constant
%and $\|(\mathcal{W}-\calDE{W})\calDE{W}\|_{\mathcal{L}(\Csp,\Csp)}\to 0$
%as $N\to\infty$,
%it remains to show the
%boundedness of $\|\calDE{W}\|_{\mathcal{L}(\Csp,\Csp)}$.
%First,
%\begin{align*}
%\calDE{W}[f](t)
%=\sum_{j=-N}^N \mu(\tDE_j)f(\tDE_j)\wDE_j(t)
%+\sum_{j=-N}^N \wDE_j(t)
%\sum_{k=-N}^N k(\tDE_j,\tDE_k) f(\tDE_k)\wDE_k(\DEt(jh)),
%\end{align*}
%which is quite similar to~\eqref{eq:operator-error-DE}.
%The estimate proceeds in a similar manner, and as a result
%it holds that
%\[
%|\calDE{W}[f](t)|
%\leq \|f\|_{\Csp} \left\{\frac{C_1}{h}+\frac{C_2}{h}\right\}(1.1 h)
%=\|f\|_{\Csp} 1.1\left\{C_1 + C_2\right\}
%\]
%for the same constants $C_1$ and $C_2$ as before.
%This completes the proof.
%\end{proof}

\subsubsection{Convergence of the DE-Sinc-Nystr\"{o}m method}

Thanks to Lemma~\ref{lem:DE-inverse-exist},
Theorem~\ref{thm:DE-converge} is established
if the next lemma is proved.

\begin{lemma}
Let the assumptions of Theorem~\ref{thm:DE-converge}
be fulfilled,
and let $N_0$ be
the positive integer appearing in Lemma~\ref{lem:DE-inverse-exist}.
Then, there exist constants $\tilde{C}_1$ and $\tilde{C}_2$
independent of $N$ such that for all $N\geq N_0$,
\begin{align}
\|\mathcal{W} u - \calDE{W} u\|_{\Csp}
&\leq \tilde{C}_1
\frac{\log(2 d N/\alpha)}{N}
 \rme^{-\pi d N/\log(2 d N/\alpha)},\label{leq:calDE-Wu}\\
\|G - \gDE\|_{\Csp}
&\leq \tilde{C}_2
\frac{\log(2 d N/\alpha)}{N}
 \rme^{-\pi d N/\log(2 d N/\alpha)}.\label{leq:G-DE-indef}
\end{align}
\end{lemma}
\begin{proof}
Since $gQ\in\LC_{\alpha}(\DEt(\domD_d))$,
\eqref{leq:G-DE-indef} clearly holds
from Theorem~\ref{Thm:DE-Sinc-Indef}.
For~\eqref{leq:calDE-Wu},
it holds that
\begin{align*}
& \mathcal{W}[u](t) - \calDE{W}[u](t)\\
&=\left[\int_a^t \mu(s)u(s)\diff s
-\sum_{j=-N}^N \mu(\tDE_j)u(\tDE_j)\wDE_j(t)
\right]\\
%\\
&\quad +
\left[\int_a^t\Vol[u](s)\diff s
-\sum_{j=-N}^N \Vol[u](\tDE_j)\wDE_j(t)
\right]
%\\
%&\quad
+\left[
\sum_{j=-N}^N \left\{\Vol[u](\tDE_j) - \VolDE[u](\tDE_j) \right\}\wDE_j(t)
\right].
\end{align*}
Since Theorem~\ref{thm:rewrite-DE-assump}
claims
$\mu u Q$ and $(\Vol u)Q$
belong to $\LC_{\alpha}(\DEt(\domD_d))$,
the first and second terms are bounded 
using Theorem~\ref{Thm:DE-Sinc-Indef}
as in~\eqref{leq:G-DE-indef}.
For the third term, since
$k(s,\cdot)u(\cdot)Q(\cdot)\in\LC_{\alpha}(\DEt(\domD_d))$
from Theorem~\ref{thm:rewrite-DE-assump},
$|\Vol[u](\tDE_j) - \VolDE[u](\tDE_j)|$
is bounded using Theorem~\ref{Thm:DE-Sinc-Indef}.
Using the bound, we have
\begin{align*}
\left|
\sum_{j=-N}^N \left\{\Vol[u](\tDE_j) - \VolDE[u](\tDE_j) \right\}
\cdot \wDE_j(t)
\right|
\leq \sum_{j=-N}^N
\left(
\tilde{C}_5 \frac{\log(2 d N/\alpha)}{N}
\rme^{-\pi d N/\log(2 d N/\alpha)} \right)
\cdot |\wDE_j(t)|
\end{align*}
for some constant $\tilde{C}_5$.
The final task is to bound $\sum_{j=-N}^N|\wDE_j(t)|$.
Since
$h\sum_{j=-N}^N\DEtDiv(jh)$ converges to $(b-a)$
as $N\to\infty$,
and from Lemma~\ref{Lem:Bound-J-Real},
there exists a constant $\tilde{C}_6$ such that
\[
\sum_{j=-N}^N|\wDE_j(t)|
\leq \sum_{j=-N}^N (1.1 h \DEtDiv(jh))
\leq \tilde{C}_6.
\]
%This completes the proof.
\end{proof}

\section{Numerical examples}
\label{sec:numer-exam}

In this section, we present
numerical results that support the convergence theorems.
All computation programs were written in C++
with double-precision floating-point arithmetic.
The sine integral $\Si(x)$ is computed using
the routine in the GNU Scientific Library.
When checking the assumptions
of Theorems~\ref{thm:SE-converge} and~\ref{thm:DE-converge},
$\epsilon$ is used as an arbitrary small positive number.

In the first example, all functions in the equation
are entire functions.
\begin{example}
\label{exmp1}
Consider the following equation~\cite[Example 3.2]{1986Brunner}
\begin{equation*}
 u'(t)=1 + 2t - u(t) + \int_0^t t(1+2t)\rme^{r(t-r)}u(r)\diff r,
\quad 0\leq t\leq 1,
\end{equation*}
with $u(0)=1$. The exact solution is $u(t)=\rme^{t^2}$.
\end{example}
In the SE case,
the assumptions in
Theorem~\ref{thm:SE-converge}
are fulfilled with $\alpha=1$ (note that $g$, $\mu$, and $k$ is bounded)
and $d=\pi - \epsilon \simeq 3.14$ (note that $d<\pi$).
In the DE case,
the assumptions in Theorem~\ref{thm:DE-converge}
are fulfilled with $\alpha=1$ (as in the SE case)
and $d=\pi/2 - \epsilon \simeq 1.57$ (note that $d<\pi/2$).
The schemes were implemented with these values for parameters $\alpha$ and $d$.
The errors were investigated
on $999$ equally spaced points in $[0,\,1]$,
and their maximum is indicated in Figure~\ref{fig:exmp1}
by the label ``\texttt{maximum error}.''
We can observe the theoretical rates;
$\Order(\exp(-c\sqrt{N}))$ in the SE-Sinc-Nystr\"{o}m method,
and
$\Order(\exp(-c' N/\log N))$ in the DE-Sinc-Nystr\"{o}m method.
Both methods converge exponentially,
but DE's rate is much higher than SE's rate.

In the next example, there is a pole at $t=-1$,
which affects the DE case.
\begin{example}
\label{exmp2}
Consider the following equation~\cite[Example 3]{2010Zarebnia}
\begin{equation*}
 u'(t)=
\frac{1}{1+t}-
\frac{\left(2+t\log(1+t)\right)\log(1+t)}{2}
%\left(1+\frac{t}{2}\log(1+t)\right)\log(1+t)
+u(t)+\int_0^t \frac{t}{r+1}u(r)\diff r,
\quad 0\leq t\leq 1,
\end{equation*}
with $u(0)=0$. The exact solution is $u(t)=\log(1+t)$.
\end{example}
In the SE case,
the assumptions in
Theorem~\ref{thm:SE-converge}
are fulfilled with $\alpha=1$ and
$d=\pi - \epsilon \simeq 3.14$
(as in Example~\ref{exmp1}).
In the DE case,
%The assumptions in Theorem~\ref{thm:DE-converge}
%are fulfilled with $d=\pi/2 - \epsilon \simeq 1.57$
%and $\alpha=1$.
we define $Z$ as
% $Z$, $X$, and $Y$ as
\[
Z=\frac{\pi}{\log 2}
\sqrt{\frac{2}{1+\sqrt{1+(2\pi/\log 2)^2}}}.
% Z=\sqrt{\frac{1+\sqrt{1+(2\pi/\log 2)^2}}{2}},
% \quad
% X=\frac{\log 2}{\pi}(Z-1),
% \quad
% Y=1 - \frac{1}{Z}.
\]
Then, the assumptions in Theorem~\ref{thm:DE-converge}
are fulfilled with $\alpha=1$ (as in Example~\ref{exmp1})
and $d=\arctan(Z) - \epsilon \simeq 1.11$
(note that $g(\DEt(\zeta))$ and $k(z,\DEt(\zeta))$ is
not analytic at $\zeta=\arcsinh(\pm\imnum - \log(2)/\pi)$).
The errors were investigated in the same way as in Example~\ref{exmp1},
and are shown in Figure~\ref{fig:exmp2}.
The graph shows the theoretical rates.

\begin{figure}[htbp]
\centering
\begin{minipage}{0.48\linewidth}
\includegraphics[width=\linewidth]{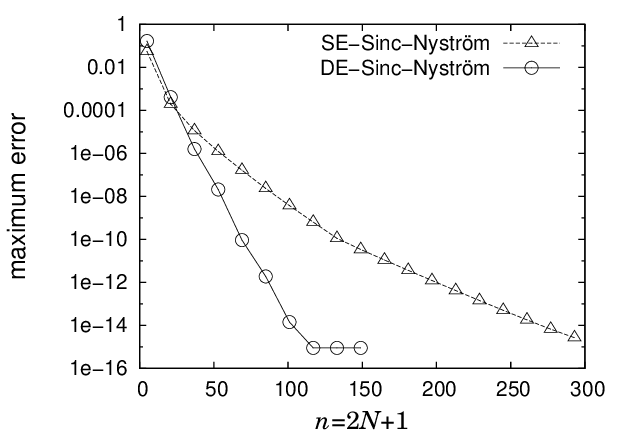}
\caption{Errors in Example~\ref{exmp1}.}
\label{fig:exmp1}
\end{minipage}
\begin{minipage}{0.48\linewidth}
\includegraphics[width=\linewidth]{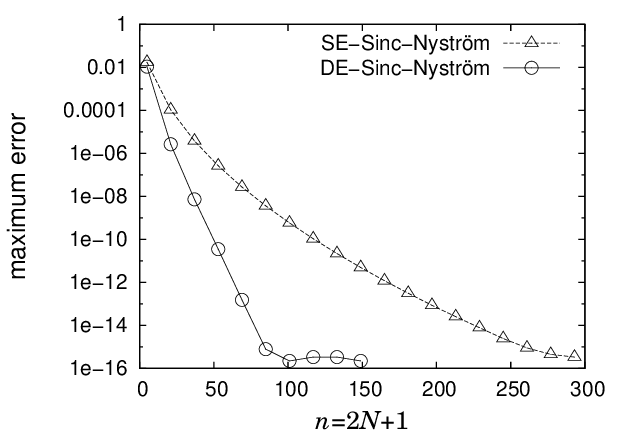}
\caption{Errors in Example~\ref{exmp2}.}
\label{fig:exmp2}
\end{minipage}
\end{figure}

The next example is more difficult because of a weak singularity at the origin.
\begin{example}
\label{exmp3}
Consider the following equation
\begin{equation*}
 u'(t)=
\frac{1}{2\sqrt{t}}
- t u(t)+\int_0^t \sqrt{\frac{t}{r}} u(r)\diff r,
\quad 0\leq t\leq 1,
\end{equation*}
with $u(0)=0$. The exact solution is $u(t)=\sqrt{t}$.
\end{example}
In the SE case,
the assumptions in
Theorem~\ref{thm:SE-converge}
are fulfilled with $\alpha=1/2$ (note that $g(t)Q(t)=t^{1/2}(1-t)/2\in\LC_{1/2}(\domD)$)
and $d=\pi - \epsilon \simeq 3.14$ (as in Example~\ref{exmp1}).
In the DE case,
the assumptions in Theorem~\ref{thm:DE-converge}
are fulfilled with $\alpha=1/2$ (as in the SE case)
and $d=\pi/2 - \epsilon \simeq 1.57$ (as in Example~\ref{exmp1}).
The errors are shown in Figure~\ref{fig:exmp3},
which shows the theoretical rates in this case as well.

The next example is even more difficult because of the infinite singular points
distributed around the endpoints.
\begin{example}
\label{exmp4}
Let $p(t)=\sin(4\arctanh t)$ and $q(t)=\cos(4\arctanh t)+\cosh(\pi)$,
and consider the following equation
\begin{align*}
 u'(t)&=
- t\sqrt{\frac{q(t)}{1-t^2}}
- \frac{2 p(t)}{\sqrt{(1-t^2)q(t)}}
+\sqrt{(3+t^2)(1-t^2)} u(t)\\
&\quad + \int_0^t 2\sqrt{\frac{3+t^2}{1-r^2}}
\left\{r + \frac{p(r)}{q(r)}\right\} u(r)\diff r,
\quad -1\leq t\leq 1,
\end{align*}
with $u(-1)=0$. The exact solution is $u(t)=\sqrt{(1-t^2)q(t)}$.
\end{example}
In the SE case,
the assumptions in
Theorem~\ref{thm:SE-converge}
are fulfilled with $\alpha=1/2$
and $d=\pi/2 - \epsilon \simeq 1.57$ (as in Example~\ref{exmp3}).
In contrast, in the DE case,
the assumptions in Theorem~\ref{thm:DE-converge}
are not fulfilled for any $d>0$ (although $\alpha=1/2$ can be found
as in Example~\ref{exmp3}),
and we do not expect to attain $\Order(\exp(-c'N/\log N))$.
However, according to Tanaka et al.~\cite{tanaka13:_desinc},
the DE-Sinc indefinite integration still converges
with a rate similar to that of SE if we
set $d=\arcsin((\pi/2 - \epsilon)/\pi)\simeq 0.523$.
The errors are shown in Figure~\ref{fig:exmp4},
and the two methods converge at similar rates.

\begin{figure}[htbp]
\centering
\begin{minipage}{0.48\linewidth}
\includegraphics[width=\linewidth]{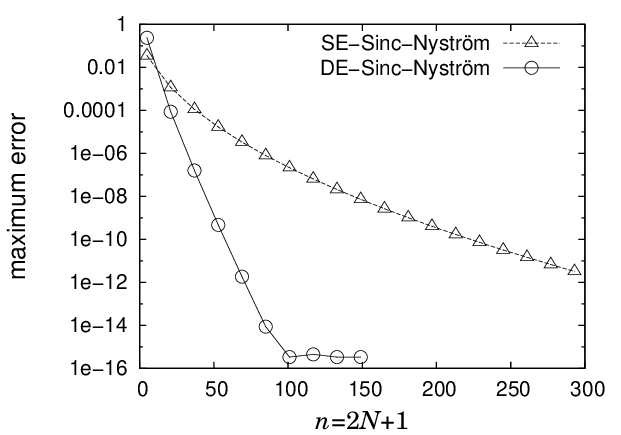}
\caption{Errors in Example~\ref{exmp3}.}
\label{fig:exmp3}
\end{minipage}
\begin{minipage}{0.48\linewidth}
\includegraphics[width=\linewidth]{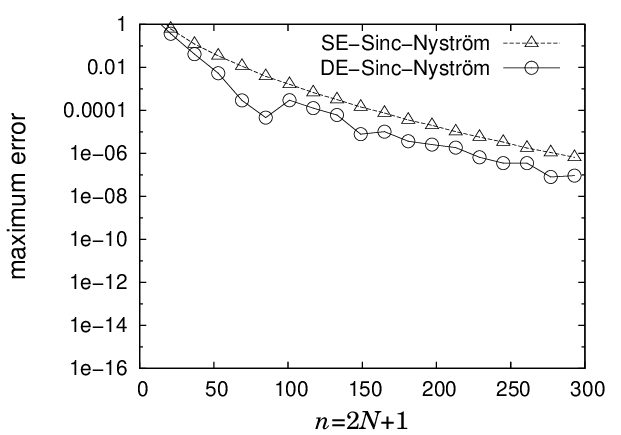}
\caption{Errors in Example~\ref{exmp4}.}
\label{fig:exmp4}
\end{minipage}
\end{figure}

In the final example, similar to Example~\ref{exmp1},
all functions in the equation are entire functions.

\begin{example}
\label{exmp5}
Consider the following equation~\cite{lin00:_petrov}
\begin{align*}
 u'(t)&=\cos t
+ \frac{\rme^t}{5}\left\{ \rme^{2t}(\cos t - 2\sin t)-1\right\}
 +\int_0^t \rme^{t+2r}u(r)\diff r,
\quad 0\leq t\leq 1,
\end{align*}
with $u(0)=0$. The exact solution is $u(t)=\sin t$.
\end{example}

The assumptions in
Theorems~\ref{thm:SE-converge}
and~\ref{thm:DE-converge}
are fulfilled with the same $\alpha$ and $d$ as those of Example~\ref{exmp1}.
The results of the SE-Sinc-Nystr\"{o}m method
and DE-Sinc-Nystr\"{o}m method are shown in Figure~\ref{fig:exmp5},
with the results of the postprocessing PGFE method~\cite{lin00:_petrov}.
Note that the horizontal axis in Figure~\ref{fig:exmp5}
is a logarithmic scale axis.
The convergence rate of the postprocessing PGFE method
is polynomial: $\Order(N^{-4})$,
whereas those of the two Sinc-Nystr\"{o}m methods are exponential.
%$\Order(\exp(-c\sqrt{N}))$ and
%$\Order(\exp(-c' N/\log N))$.
For this reason, the two Sinc-Nystr\"{o}m methods
eventually overtake the postprocessing PGFE method.

\begin{figure}[htbp]
\centering
\begin{minipage}{0.48\linewidth}
\includegraphics[width=\linewidth]{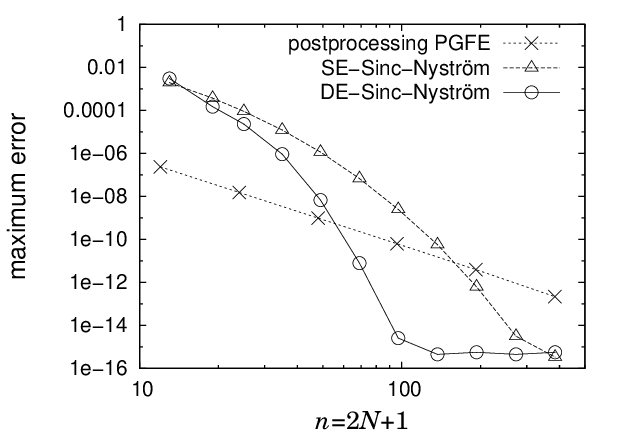}
\caption{Errors in Example~\ref{exmp5}.}
\label{fig:exmp5}
\end{minipage}
%\begin{minipage}{0.48\linewidth}
%%\includegraphics[scale=.7]{example3.eps}
%\includegraphics[width=\linewidth]{example4.eps}
%\caption{Errors in Example~\ref{exmp4}.}
%\label{fig:exmp4}
%\end{minipage}
\end{figure}

\section{Concluding remarks}
\label{sec:concl}

A Sinc-Nystr\"{o}m method for~\eqref{eq:Volterra-integro-diff}
was developed by Zarebnia~\cite{2010Zarebnia}
(called the SE-Sinc-Nystr\"{o}m method in this paper),
for which there remain two points to be discussed.
First, the convergence rate of the method was suggested
as $\Order(\exp(-c\sqrt{N}))$, but not proved.
Second, the regularity of solution $u$ is necessary for
implementation, although $u$ is an unknown function to be determined.
For the first point, this paper showed by theoretical analysis that
the convergence rate is in fact $\Order(\exp(-c\sqrt{N}))$.
For the second point, this paper showed by theoretical analysis that
the regularity of solution $u$ can be determined from
the known functions $g$, $\mu$, and $k$.

In addition, this paper proposed a new method
called the DE-Sinc-Nystr\"{o}m method
by replacing the variable transformation in
the SE-Sinc-Nystr\"{o}m method.
By a theoretical analysis, this paper showed that
$\Order(\exp(-c'N/\log N))$ can be attained by
the DE-Sinc-Nystr\"{o}m method,
and also showed the same result as above on the second point
(the regularity of $u$).

As explained in Remark~\ref{rem:condition-number},
the invertibility of
the coefficient matrix of the resulting linear system was proved in this paper,
but uniform-boundedness of the norm of the matrix was not proved.
The latter point
%, and moreover, the condition number of the matrix
will be investigated on another occasion.
%these two methods described in this paper may have
%a good property: the condition number of
%the coefficient matrix of the resulting linear system
%is uniformly bounded.
%The proof is not given in this paper, but will be reported elsewhere.
Generalization of the presented methods for nonlinear
Volterra integro-differential equations is also considered
as a future work.
\bibliography{x}
\end{document}